\documentclass[a4paper,11pt]{amsart}
\usepackage{rotating}
\usepackage{pdflscape}
\usepackage[a4paper, left=2cm, right=2cm, top=2cm,bottom=2cm]{geometry}
\usepackage{amsmath}
\usepackage{amssymb}
\usepackage{MnSymbol}
\usepackage{color}
\usepackage{enumerate}
\usepackage{amsthm}
\usepackage{listings}
\usepackage[all]{xy}
\usepackage{longtable}
\usepackage{graphicx}
\usepackage{hyperref}
\usepackage{mathrsfs}
\usepackage[style=alphabetic,backend=bibtex]{biblatex}
\addbibresource{../Bibtex/sources.bib}

\theoremstyle{definition}
\newtheorem{theorem}{Theorem}[section]

\newtheorem{lemma}[theorem]{Lemma}

\newtheorem{example}[theorem]{Example}

\newcommand{\Tot}{\operatorname{Tot}}

\newcommand{\Kosz}{\operatorname{Kosz}}

\newcommand{\Hom}{\operatorname{Hom}}

\newcommand{\rank}{\operatorname{rank}}

\newcommand{\Spec}{\operatorname{Spec}}
\newcommand{\D}{\operatorname{d}\!}
\newcommand{\CC}{\mathbb{C}}

\newcommand{\ZZ}{\mathbb{Z}}
\newcommand{\NN}{\mathbb{N}}

\newcommand{\PP}{\mathbb{P}}
\newcommand{\OO}{\mathcal{O}}

\newcommand{\id}{\operatorname{id}}
\newcommand{\im}{\operatorname{im}}

\setcounter{MaxMatrixCols}{20}

\title{Some computational aspects of spectral sequences in \v Cech cohomology}

\author{Matthias Zach}

\address{
  Matthias Zach: 
  RPTU Kaiserslautern-Landau,
  Gottlieb-Daimler-Stra{\ss}e, Geb\"aude 48,
  67663 Kaisers\-lautern,
  Germany 
}

\begin{document}

\begin{abstract}
  Sheaf cohomology or, more generally, higher direct images 
  of coherent sheaves along proper morphisms are central to modern 
  algebraic geometry. However, the computation of these objects is 
  a non-trivial and expensive task which easily challenges the capacities 
  of modern computers. We describe an algorithm and its implementation 
  to compute a spectral sequence converging to 
  the higher direct images of a bounded complex of sheaves on 
  a product of projective spaces $\PP = \PP^{r_1} \times \dots \times \PP^{r_m}$ 
  over an arbitrary affine base $\Spec R$. We assume the ring $R$ 
  to be computable and the complex of sheaves to be represented by 
  an actual complex of (multi-)graded modules. 
\end{abstract}

\maketitle

\tableofcontents

\section{Results} 

Let $S = R[x_{i, j} : i=1,\dots,m, j=0, \dots, r_i]$ be the multigraded 
polynomial ring associated to a product of projective spaces 
$\PP = \PP^{r_1} \times \dots \times \PP^{r_m}$ over $\Spec R$ for some 
computable ring $R$. Given a right-bounded cocomplex of finitely generated, 
free, graded $S$-modules $(M^\bullet, \varphi^\bullet)$, one wishes to compute 
the direct image of the associated complex of sheaves $\widetilde M^\bullet$ 
along the projection map $\pi \colon \PP \to \Spec R$ 
\[
  R\pi_* \left(\widetilde M^\bullet\right)
\]
as a complex of $R$-modules up to quasi-isomorphism. 
The spectral sequence 
\[
  E_1^{p, q} = \check H^q(\widetilde M^p; \mathfrak U) \Rightarrow 
  R^{p+q}\pi_* \left(\widetilde M^\bullet\right)
\]
for the \v Cech double complex 
$\check C^\bullet(\widetilde M^\bullet, \mathfrak U)$ for an affine 
covering $\mathfrak U$ of $\PP$ converges to the cohomology 
of this direct image and can thus be considered as a reasonable approximation of it;
see Section \ref{sec:ReviewOfCechCohomology}.
After reviewing a general result for the reduction of the problem to a computation 
on finitely generated $S$- and $R$-modules in Theorem \ref{thm:Truncations}, 
we present an algorithm 
in Section \ref{sec:ApproximationViaSpectralSequences}
which systematically exploits natural massive redundance 
in the computation of the $\infty$-page of 
that spectral sequence and reduces it generically to smaller, unique
problems on finitely generated $R$-modules. We discuss the implementation 
in more detail in Section \ref{sec:TheAlgorithm}. A prototype 
will soon become available in the computer algebra system Oscar \cite{Oscar}.

While algorithms for sheaf cohomology using Tate resolutions are generally 
believed to be faster \cite{EisenbudSchreyer08}, \cite{BrownErman24}, we 
do believe that the approach via \v Cech cohomology has its merits; not least 
because of its geometric accessibility for further structure like functoriality, 
cap/cup products, and -- in the case of K\"ahler differentials -- as a recipient 
for characteristic classes and the description of Hodge structures.

\section{A brief review of \v Cech cohomology}
\label{sec:ReviewOfCechCohomology}

Let $R$ be a Noetherian, commutative ring with unit which is \textit{computable} 
(see e.g. \cite{BarakatLangeHegermann11} and \cite{CoquandMoertbergSiles12})
and 
$\PP = \PP^{r_1} \times \dots \times \PP^{r_m} \overset{\pi}{\longrightarrow} \Spec R$ 
a product 
of projective spaces over $\Spec R$. Associated to $\PP$ we have a multigraded 
ring 
\[
  S = R[x_{1, 0}, \dots, x_{1, r_1}, x_{2, 0}, \dots, x_{m, r_m}], \quad 
  \deg(x_{i, k}) = (0, \dots, 0, \underbrace{1}_{i\textnormal{-th entry}}, 0, \dots, 0) \in \ZZ^m, k = 1,\dots, r_i.
\]
A finitely generated, graded $S$-module $M$ represents a coherent sheaf
$\mathcal F = \widetilde M$ on $\PP$. Note that for a given sheaf
$\mathcal F$, there can be multiple, non-isomorphic graded modules
$M$ representing it; in particular, such a representation involves a
choice. A morphism of graded modules $\varphi \colon M \to N$
induces a morphism of coherent sheaves
\[
  \tilde \varphi : \widetilde M \to \widetilde N.
\]
This \textit{sheafification} is an exact functor and 
a (bounded) complex of $S$-modules $(M^\bullet, \varphi^\bullet)$ gives rise 
to a (bounded) complex of coherent sheaves. 

An accessible definition of the direct image of such a complex 
along the projection map $\pi \colon \PP \to \Spec R$ can be given as follows. 
Suppose $(M^\bullet, \varphi^\bullet)$ is a right-bounded cocomplex of finitely generated 
graded $S$-modules. 
Choose any covering $\mathfrak U = \{U_j\}_{j = 1}^N$ of $\PP$ by \textit{principal open subsets} 
$U_j = D(h_j) := \{h_j \neq 0\}$, $h \in S$;  
then the direct image along $\pi$
is (quasi-) isomorphic to the right-bounded complex of $R$-modules given by 
\[
  R\pi_* (\widetilde M^\bullet) \cong 
  \Tot\left(\check C^{\bullet}(\widetilde M^\bullet; \mathfrak U)\right).
\]
See e.g. the 
\href{https://stacks.math.columbia.edu/tag/01FP}{Stacks project} 
\cite{stacks-project}, Section 20.25\footnote{Retrieved on June 2nd, 2025} 
for an overview on this topic.
Here, we denote by $\check C^{\bullet}(\widetilde M^\bullet; \mathfrak U)$
the \textit{\v Cech double complex} with entries
\[
  \check C^{p, q} = \check C^q(\widetilde M^{p}; \mathfrak U) = \bigoplus_{0<j_1 < \dots < j_{q+1} \leq N} \widetilde M^{p}\left(U_{j_1} \cap \dots \cap U_{j_{q+1}}\right)
\]
where the \textit{horizontal differential} is induced by $\varphi$ on the respective summands, and 
the \textit{vertical differential} is given by the \v Cech map 
\[
  \check \partial \colon 
  \bigoplus_{0<i_1 < \dots < i_{q} \leq N} a_{i_1,\dots, i_q} \mapsto 
  \bigoplus_{0<j_1 < \dots < j_{q+1} \leq N} b_{j_1,\dots, j_{q+1}}, \quad 
  b_{j_1,\dots, j_{q+1}} = \sum_{k=1}^{q+1} (-1)^{k+1} a_{j_1,\dots, \hat{j_k}, \dots, j_{q+1}}|U_{j_1,\dots,j_{q+1}}.
\] 

By construction of the sheafification functor $\tilde \cdot \colon M \mapsto \mathcal F = \widetilde M$, 
we obtain the sections of $\mathcal F$ on an affine chart $U = D(f) \subset \PP$ as
\[
  \mathcal F(U) \cong \left(M[f^{-1}]\right)_0,
\]
the \textit{degree-$0$-strand} of the localization of $M$ at $f$. 
It is a common exercise in commutative algebra to show that $M[f^{-1}]$ is 
actually isomorphic to the limit of the directed system 
\[
  M \overset{f\cdot -}{\longrightarrow} 
  M \overset{f\cdot -}{\longrightarrow} 
  M \overset{f\cdot -}{\longrightarrow} 
  M \overset{f\cdot -}{\longrightarrow} 
  \cdots
\]
where we can think of the maps as extending a fraction 
\[
  \frac{a}{f^k} \mapsto \frac{f\cdot a}{f^{k+1}}, \quad a \in M.
\]
In this sense, the \v Cech complex on the sheaf associated to a graded module $M$
can be seen as a direct limit of 
strands\footnote{For this note, we will follow the convention that the $\Hom(-, M)$-functor 
preserves the direction of maps, but 
flips the sign of the cohomological degrees in its first argument, i.e. 
$C_k$ in cohomological degree $k$ is taken to $\Hom(C_k, M)$ in \textit{co}homological 
degree $-k$.}
\begin{equation}
  \label{eqn:CechComplexAsDirectLimit}
  \check C^\bullet(\widetilde M; \mathfrak U) \cong 
  \lim_{k\to \infty} \Hom\left(\Kosz^*(h_1^k,\dots, h_N^k), M\right)_0
\end{equation}
where we denote by 
\begin{equation}
  \label{eqn:DefTruncatedKoszulComplex}
  \Kosz^*(f_1,\dots, f_n) : 
  0 \longrightarrow
  \bigwedge^{n} F \overset{f\invneg}{\longrightarrow} 
  \bigwedge^{n-1} F \overset{f\invneg}{\longrightarrow} 
  \cdots \overset{f\invneg}{\longrightarrow} 
  \bigwedge^{1} F \overset{f\invneg}{\longrightarrow} 
  0, 
  \quad
  F = \bigoplus_{i=1}^n S(-\deg f_i).
\end{equation}
the truncated \textit{graded Koszul cocomplex} 
for the contraction with $f = (f_1,\dots, f_n)$
\[
  f\invneg \, \colon 
  e_{i_1} \wedge \dots \wedge e_{i_p} \mapsto \sum_{k=1}^p (-1)^{i+1} f_{i_k} \cdot 
  e_{i_1} \wedge \dots \wedge \widehat{e_{i_k}} \wedge \dots \wedge e_{i_p},
\]
but shifted so that the term $\bigwedge^1 F$ appears in cohomological degree zero
and the other non-trivial ones in negative degrees.
The functoriality for the Koszul cocomplex (see e.g. \cite{Zach24}) then 
provides us with the structure of the direct limit. 

For a fixed value of $k$, an element in cohomological degree $p$ on the right hand side of 
(\ref{eqn:CechComplexAsDirectLimit}), say 
\[
  e_{i_0} \wedge \dots \wedge e_{i_p} \mapsto a \in M,
\]
then stands for the fraction $\frac{a}{h_{i_0}^k \cdots h_{i_p}^k}$ in the respective 
direct summand of the \v Cech complex. 

From these explicit descriptions we may now conclude the following theorem 
as a first step to rendering the approach to the direct image via \v Cech 
cohomology computable. We refer to \cite[Section 2]{Hartshorne67} 
and the references therein for a more detailed account.

\begin{theorem}
  \label{thm:CechCohomologyAsDirectLimit}
  Let $(M^\bullet, \varphi^\bullet)$ be a right-bounded cocomplex of finitely 
  generated $S$-modules. Then the direct image of the associated 
  complex of sheaves $\widetilde M^\bullet$ on $\PP$ along the projection 
  map $\pi \colon \PP \to \Spec R$ is given by 
  \begin{equation}
    \label{eqn:PushforwardAsDirectLimit}
    R\pi_* \widetilde M^\bullet = 
    \lim_{k \to \infty}
    \Tot\left(\Hom\left(\Kosz^*(h_1^k,\dots,h_N^k), M^\bullet\right)\right)_0.
  \end{equation}
\end{theorem}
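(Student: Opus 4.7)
The plan is to assemble the statement out of the two ingredients already laid out in the exposition: the representation of $R\pi_*$ by the Čech totalization for a principal affine cover, and the identification \eqref{eqn:CechComplexAsDirectLimit} of the Čech complex of a single graded module as a filtered colimit of degree-zero strands of Hom-complexes against truncated Koszul cocomplexes.

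First, I would invoke the general theorem from the Stacks project cited in Section \ref{sec:ReviewOfCechCohomology}: since $\mathfrak U = \{D(h_j)\}_{j=1}^N$ is an affine open cover of $\PP$ and $\pi$ restricted to each finite intersection $U_{j_1} \cap \dots \cap U_{j_{q+1}} = D(h_{j_1} \cdots h_{j_{q+1}})$ remains affine, higher direct images vanish on every multi-intersection. Thus $R\pi_* \widetilde M^\bullet$ is represented by the totalization of the Čech double complex $\check C^\bullet(\widetilde M^\bullet;\mathfrak U)$.

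Next, I would apply \eqref{eqn:CechComplexAsDirectLimit} term-wise in the complex direction: for each fixed cohomological index $p$, one has a natural isomorphism of cocomplexes
\[
  \check C^\bullet\bigl(\widetilde M^p;\mathfrak U\bigr) \;\cong\; \varinjlim_{k} \Hom\bigl(\Kosz^*(h_1^k,\dots,h_N^k),\,M^p\bigr)_0,
\]
and the key point is that these isomorphisms are functorial in $M^p$, so they assemble in the horizontal direction: the maps $\varphi^p \colon M^p \to M^{p+1}$ induce a compatible system of morphisms of double complexes on both sides, and the resulting identification intertwines the horizontal Čech differential with the one induced by $\varphi^\bullet$ in the Hom-argument. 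After this identification the claim reduces to showing that totalization commutes with the filtered colimit over $k$.

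The step I expect to be the crux is this commutation of $\Tot$ with $\varinjlim_k$. In full generality, totalization can mix direct sums and products, but in the present situation the Koszul cocomplex $\Kosz^*(h_1^k,\dots,h_N^k)$ is concentrated in the bounded range $[-(N-1),0]$ independent of $k$, and $(M^\bullet,\varphi^\bullet)$ is right-bounded; hence for each total cohomological degree $n$, only finitely many pairs $(p,q)$ contribute, so the totalization is a finite direct sum in each degree. Since filtered colimits commute with finite direct sums (in fact with all colimits) in the category of $R$-modules and are exact, they pass through $\Tot$ termwise and preserve the differentials. Finally, taking the degree-zero strand commutes with filtered colimits as well, because it is a direct summand of the underlying graded module. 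Combining these three reductions yields the asserted formula \eqref{eqn:PushforwardAsDirectLimit}.
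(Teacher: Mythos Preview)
Your proposal is correct and follows exactly the route the paper intends: the paper does not give a formal proof but simply states that the theorem follows ``from these explicit descriptions'' (namely the \v Cech totalization for $R\pi_*$ and the direct-limit identification \eqref{eqn:CechComplexAsDirectLimit}), referring to \cite{Hartshorne67} for details. Your write-up makes the implicit reasoning explicit, and in particular your justification for why $\Tot$ commutes with the filtered colimit (boundedness of the Koszul cocomplex and right-boundedness of $M^\bullet$) is a useful point the paper leaves unsaid.
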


\section{Approximation by finitely generated submodules}
\label{sec:ApproximationByFiniteSubmodules}

While Equation (\ref{eqn:PushforwardAsDirectLimit}) in 
Theorem \ref{thm:CechCohomologyAsDirectLimit} 
is fairly straightforward to write down, 
it contains some technical subtleties which obstruct accessibility for practical 
computations. First of all the fact that the involved $R$- and $S$-modules are not 
anymore finitely presented due to the limit construction. 
When it comes to computing cohomology of $R\pi_* \widetilde M^\bullet$ 
or quasi-isomorphic complexes up to a specific cohomological degree, we may, 
however, pass to suitable truncations in the pole orders $k$. This will 
be detailed in Theorem \ref{thm:Truncations} below and we will now start 
preparing for its proof.

\medskip

A natural first step in the computation of $R\pi_* \widetilde M^\bullet$ 
is to replace $M^\bullet$ by some 
(Cartan-Eilenberg) resolution of free $S$-modules.
We will henceforth and without loss 
of generality assume that $M^\bullet$ is of the form
\begin{equation}
  \label{eqn:CartanEilenbergResolution}
  M^\bullet \, :\,
  0 \longrightarrow
  \bigoplus_{i_l=1}^{b_{{p_0}-l}} S(-d_{p_0-l, i_l})
  \overset{A_{p_0 + l}}{\longrightarrow}
  \cdots
  \overset{A_{p_0-2}}{\longrightarrow}
  \bigoplus_{i_1=1}^{b_{p_0-1}} S(-d_{p_0+1, i_1})
  \overset{A_{p_0-1}}{\longrightarrow}
  \bigoplus_{i_0=1}^{b_{p_0}} S(-d_{p_0, i_0})
  \longrightarrow
  0
\end{equation}
for some $p_0 \in \ZZ$, shifts $d_{p, i} \in \ZZ^m$, and 
morphisms $A_p$ of degree zero. If the complex is unbounded 
to the left, we will subsume this under the case $l=\infty$. 

\medskip
For products of projective spaces, there is a pretty simple (almost canonical) choice 
for the polynomials $h_i$. In case of just one factor $m=1$, we choose the 
coordinate functions
\[
  h_1,\dots, h_{r_1+1} = x_0, \dots, x_{r_1}
\]
and obtain the truncated Koszul cocomplexes in the powers of these elements 
for the expressions in Theorem \ref{thm:CechCohomologyAsDirectLimit}. 
In case of a product with more than one factor, the straightforward choice 
is to take products of the variables for the respective factors:
\[
  h_{i_1,\dots, i_m} = x_{1, i_0} \cdot \dots \cdot x_{m, i_m} 
\]
for all combinations $0 \leq i_k \leq r_k$, $k = 1,\dots, m$.
Note that the resulting truncated Koszul cocomplex turns out to be (isomorphic to) 
a tensor product of the truncated Koszul cocomplexes for the individual factors. 
In fact, the latter allows for a more fine grained control over the exponents
of the denominators which makes it the preferred cocomplex to work with. 
For a $\ZZ^m$-graded ring $S$ as above and an \textit{exponent vector} 
$e = (e_1,\dots, e_m) \in \NN_0^m$ we therefore set 
\begin{equation}
  \label{eqn:TruncatedCechCocomplex}
  K_{\leq e}^\bullet = 
  K_{\leq e}^\bullet(S) := 
  \Kosz^*\left(x_{1, 0}^{e_1},\dots, x_{1, r_1}^{e_1}\right) \otimes
  \dots
  \otimes
  \Kosz^*\left(x_{m, 0}^{e_m},\dots, x_{m, r_m}^{e_1}\right).
\end{equation}
With these choices, the morphisms of strands in the direct limit 
on the right hand side of (\ref{eqn:PushforwardAsDirectLimit})
turn into inclusions (and extensions) of monomial diagrams.
The following example should give a feeling for 
the explicit form this actually takes.

\begin{example}
  \label{exp:StrandExhaustion}
  Consider the case $\PP = \PP^1$ with homogeneous coordinate ring 
  $S = R[x, y]$ over $R$ and the strand for $-d = -4$. We illustrate 
  this example in Figure \ref{fig:MonomialDiagramsCechComplex}.
  The direct image for $\widetilde S(-4) = \OO(-4)$ 
  has homology $R^3$ in cohomological degree $1$ and zero elsewhere. 
  \begin{figure}
    \centering
    \includegraphics[page=2, clip=true, scale=0.5, trim=0 7cm 0 0]{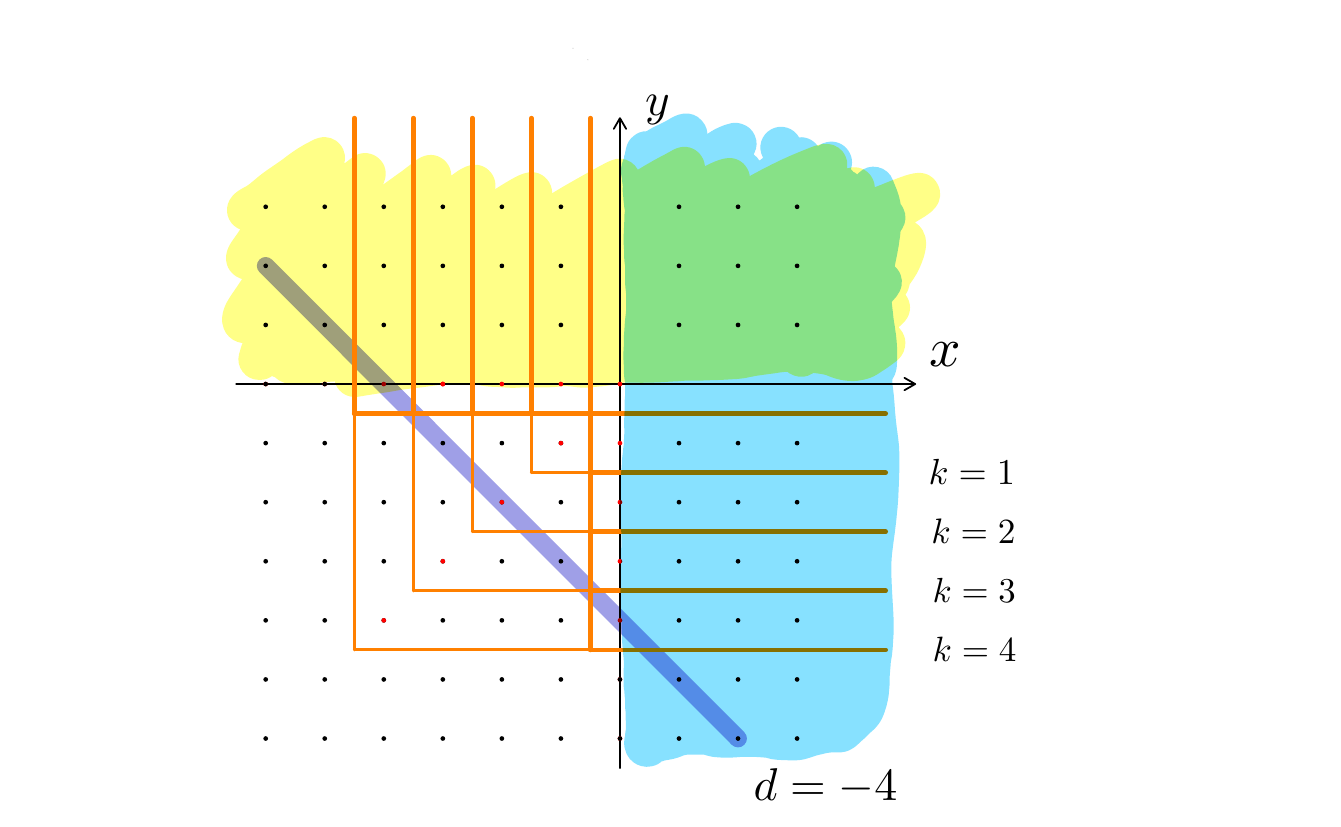}
    \[
      \downarrow \check \partial
    \]
    \includegraphics[page=1, clip=true, scale=0.5, trim=0 7cm 7cm 0]{monomial_diagrams.pdf}
    \label{fig:MonomialDiagramsCechComplex}
    \caption{Monomial diagrams for the exhaustion of the \v Cech complex from Example \ref{exp:StrandExhaustion} as a direct limit; the outlines of the
    quadrants for the contributions from the duals of the 
    truncated Koszul complexes for different $k$ in orange and the monomials 
    on the strand for $d=-4$ marked in dark blue}
  \end{figure}
  For the exponent $k = 1$ we have a the zero complex
  \[
    \Hom\left(\Kosz^*\left(x^1, y^1\right), S\right)_{-1} 
    \cong 
    \left(
      R^0 \oplus R^0 \overset{\varphi}{\longrightarrow} 
      R^0
    \right),
  \]
  simply, because all strands are empty in these degrees. 
  Starting from exponent $k=2$, we begin to partially recover the 
  cohomology
  \[
    \Hom\left(\Kosz^*\left(x^2, y^2\right), S\right)_{-2} 
    \cong 
    \left(
      R^0 \oplus R^0 \overset{\varphi}{\longrightarrow} 
      R^1
    \right),
  \]
  where the single non-zero generator stands for the monomial 
  $\frac{1}{x^2 y^2}$. 
  For $k=3$ we obtain a complex which is quasi-isomorphic to 
  the direct image
  \[
    \Hom\left(\Kosz^*\left(x^3, y^3\right), S\right)_{-3} 
    \cong 
    \left(
      R^0 \oplus R^0 \overset{\varphi}{\longrightarrow} 
      R^3
    \right),
  \]
  with the three generators being $\frac{x^2}{x^3y^3}, \frac{xy}{x^3 y^3}$, and $\frac{y^2}{x^3 y^3}$.
  From $k=4$ and onwards, the strands become bigger than necessary
  \[
    \Hom\left(\Kosz^*\left(x^k, y^k\right), S\right)_{-4} 
    \cong 
    \left(
      R^{k-3} \oplus R^{k-3} \overset{\varphi}{\longrightarrow} 
      R^{2k - 3}
    \right)
  \]
  and we observe cancellation of monomials when passing to the cohomology.
\end{example}

The particularly simple form of the morphisms in the direct limit as inclusions of monomial 
diagrams allows us to write down not only their inclusions for $k \leq k'$, but also 
projections
\[
  \iota_k^{k'} \colon
  \Hom\left(\Kosz^*\left(x^{k'}, y^{k'}\right), S\right)_{-d} 
  \longleftrightarrow 
  \Hom\left(\Kosz^*\left(x^k, y^k\right), S\right)_{-d}
  \colon \pi_k^{k'}
\]
so that $\pi_k^{k'}$ is a left-inverse to $\iota_k^{k'}$.
Given the context of the \v Cech 
complex, these projections are similar to ``division with remainder'', but forgetting 
eventual remainders. 

We would like to reduce the computation of the direct image to a procedure 
involving only finitely generated $S$- and $R$-modules. To this end, we 
present the following preparatory lemma.
\begin{lemma}
  \label{lem:HomotoyEquivalence}
  For any degree $d \in \ZZ^m$ there exists a \textit{minimal exponent vector}
  $e_\textnormal{min}(d)$ so that for 
  every pair $e_\textnormal{min}(d) \leq e \leq e'$ 
  and the associated inclusion and projection morphisms
  \[
    \iota \colon \Hom\left(K_{\leq e}^\bullet(S), S\right)_{-d} \longleftrightarrow 
    \Hom\left(K_{\leq e'}^\bullet(S), S\right)_{-d} \colon \pi
  \]
  the composition $\iota \circ \pi$ is homotopy equivalent to the identity 
  \[
    \mathrm{id} - \iota \circ \pi = h \circ \check \partial + \check \partial \circ h
  \]
  on the complex for $e'$ for some homotopy
  \[
    h = h_e^{e'} \colon 
    \Hom\left(K_{\leq e'}^\bullet(S), S\right)_{-d}
    \longrightarrow
    \Hom\left(K_{\leq e'}^\bullet(S), S\right)_{-d}[-1].
  \]
\end{lemma}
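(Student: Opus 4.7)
The plan is to reduce the statement to the acyclicity of $L := \ker\pi$ as a subcomplex of $\Hom(K_{\leq e'}^\bullet(S), S)_{-d}$, and then extract the homotopy $h$ from a contracting homotopy of $L$. The first step is to verify that $\pi$ is a chain map satisfying $\pi\circ\iota = \id$. In the monomial picture, $\pi$ sends a basis element $\frac{a}{\prod_{(i,j) \in I} x_{i,j}^{e'_i}}$ to $\frac{a/\prod_{(i,j) \in I} x_{i,j}^{e'_i - e_i}}{\prod_{(i,j) \in I} x_{i,j}^{e_i}}$ whenever the numerator is monomially divisible by $\prod_{(i,j) \in I} x_{i,j}^{e'_i - e_i}$, and to zero otherwise. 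A direct case distinction — divisibility holds versus fails — shows that this operation commutes with the codifferentials (multiplication by the respective Koszul generators $x_{i,j}^{e_i}$ and $x_{i,j}^{e'_i}$) once the multigraded signs from the tensor product structure of (\ref{eqn:TruncatedCechCocomplex}) are tracked. Combined with $\pi \circ \iota = \id$, this yields a split short exact sequence
\[
  0 \to L \to \Hom\bigl(K_{\leq e'}^\bullet(S), S\bigr)_{-d} \overset{\pi}{\longrightarrow} \Hom\bigl(K_{\leq e}^\bullet(S), S\bigr)_{-d} \to 0,
\]
so that $\id - \iota\pi$ becomes precisely the projection onto the direct summand $L$.

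For the second step, Theorem \ref{thm:CechCohomologyAsDirectLimit} identifies $\lim_{e\to\infty}\Hom(K_{\leq e}^\bullet(S), S)_{-d}$ with a complex computing $R\pi_* \widetilde{S(-d)}$, and the cohomology of this limit is the coherent cohomology of the sheaf $\widetilde{S(-d)}$ on $\PP$. By the classical computation of Serre and Grothendieck for products of projective spaces, this cohomology is finitely generated over $R$ and is represented by explicit Laurent monomials of bounded pole order, obtained factor-by-factor via a Künneth-type argument applied to each $\PP^{r_i}$. Taking $e_{\min}(d)$ componentwise large enough to accommodate all representative monomials in each single-factor truncated Koszul cocomplex ensures that $\iota$ induces an isomorphism on cohomology for every $e'\geq e \geq e_{\min}(d)$, and the split short exact sequence above then forces $H^*(L) = 0$.

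Finally, each graded piece of $L$ along the $-d$-strand is a finitely generated free $R$-module and $L$ is a bounded complex, so acyclicity implies that $L$ is split exact and therefore admits a contracting homotopy $h_L \colon L \to L[-1]$. Extending $h_L$ by zero on the complementary subcomplex $\iota(\Hom(K_{\leq e}^\bullet(S), S)_{-d})$ defines the required $h = h_e^{e'}$, and the identity $\id - \iota\pi = h\check\partial + \check\partial h$ then follows at once from the direct sum decomposition. The main obstacle lies in the first two steps: verifying the chain-map property of $\pi$ cleanly at those monomial boundaries where the divisibility check switches output (together with the accompanying sign bookkeeping), and identifying $e_{\min}(d)$ in a way compatible with the tensor product decomposition of $K_{\leq e}^\bullet(S)$ into single-factor truncated Koszul cocomplexes.
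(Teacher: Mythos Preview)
Your argument is correct but takes a different route from the paper's. The paper proceeds by induction on the number of factors $m$: the case $m=1$ is left to the reader (with Example~\ref{exp:StrandExhaustion} as illustration and an explicit formula for $e_{\min}$), and the inductive step exploits that $K_{\leq e'}^\bullet(S)$ is a tensor product of single-factor truncated Koszul cocomplexes, building the homotopy on the tensor product directly from the single-factor homotopies via the standard formula $H = h_1 \otimes \id + (\iota_1\pi_1) \otimes h_2$. You instead argue abstractly: once $\pi$ is a chain map splitting $\iota$, the map $\iota$ is a quasi-isomorphism (by Serre's computation plus a K\"unneth argument, which applies cleanly here since all single-factor strands are bounded complexes of free $R$-modules with free cohomology), so $L = \ker\pi$ is an acyclic bounded complex of projectives and therefore split exact. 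The paper's approach is more constructive and yields an explicit, recursively defined $h$ in the monomial basis --- precisely what the context object in Section~\ref{sec:TheAlgorithm} needs to cache and reuse. Your approach is conceptually cleaner and sidesteps the tensor-product sign bookkeeping, but only gives existence of $h$; if one later needs $h$ in closed form, unwinding the splitting of $L$ in the monomial basis leads back to essentially the paper's formula.
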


\noindent
Here we write $e \leq e'$ if and only if $e_i \leq e'_i$ for every $i = 1,\dots, m$. 

\begin{proof}
  We leave the case $m=1$ of one factor $\PP = \PP^n$ to the reader and refer to 
  Example \ref{exp:StrandExhaustion}
  for an illustration. The minimal exponent vector for degree $-d$ is 
  \[
    e_\textnormal{min}(-d) = 
    \begin{cases}
      0 & \textnormal{if } (-d) \geq 0\\
      d-1 & \textnormal{otherwise}.
    \end{cases}
  \]
  The general case can then be proved by induction. Suppose we have two pairs of complexes, 
  both endowed with an injection and a projection 
  \[
    \iota_i \colon C_i^\bullet \longleftrightarrow D_i^\bullet \colon \pi_i, \quad i = 1, 2
  \]
  and homotopies $h_i \colon D_i^\bullet \longrightarrow D_i^\bullet[-1]$ for $\id_{D_i} - \iota_i \circ \pi_i$.
  Consider the induced injections and projections on the tensor product
  \[
    \iota_1 \otimes \iota_2 \colon C_1^\bullet \otimes C_2^\bullet \longleftrightarrow
    D_1^\bullet \otimes D_2^\bullet
    \colon \pi_1 \otimes \pi_2
  \]
  and their total complexes, and the morphism
  \[
    H:=\mathrm{Tot}\left(h_1 \otimes \mathrm{id}_{D_2^\bullet} + \mathrm{id}_{C_1^\bullet} \otimes h_2\right)
    \colon 
    \mathrm{Tot}\left(D_1^\bullet \otimes D_2^\bullet\right)
    \longrightarrow \mathrm{Tot}\left(D_1^\bullet \otimes D_2^\bullet\right)[-1].
  \]
  Then it is easy to check that in fact 
  \[
    H \circ D + D \circ H = \mathrm{id}_{\mathrm{Tot}(D_1^\bullet\otimes D_2^\bullet)}
    - \mathrm{Tot}(\iota_1\otimes \iota_2) \circ \mathrm{Tot}(\pi_1\otimes \pi_2)
  \]
  for $D$ the differential on $\mathrm{Tot}(D_1^\bullet \otimes D_2^\bullet)$.
\end{proof}

\begin{theorem}
  \label{thm:Truncations}
  Suppose $M^\bullet$ is a \textit{bounded} complex of free, graded $S$-modules 
  as in (\ref{eqn:CartanEilenbergResolution}). 
  Let $e_0 \in \NN^m$ be the supremum\footnote{We refer to the supremum with respect to the partial ordering 
  given by the natural ordering on the components} of the minimal exponent vectors 
  $e_\textnormal{min}(-d)$ for $-d = -d_{p, i_p}$ the shifts appearing in $M^\bullet$.
  Then the direct image 
  \[
    R\pi_* \widetilde M^\bullet \cong_{\textnormal{qis}}
    \Tot\left(\Hom\left(K_{\leq e_0}^\bullet(S), M^\bullet\right)\right)_0
  \]
  is quasi-isomorphic to degree-$0$-strand of the complex of finite 
  $S$-modules on the right hand side.
\end{theorem}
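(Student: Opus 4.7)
The plan is to combine Theorem \ref{thm:CechCohomologyAsDirectLimit} with a column-wise application of Lemma \ref{lem:HomotoyEquivalence} via a spectral sequence comparison. First I would reindex the direct limit of Theorem \ref{thm:CechCohomologyAsDirectLimit} by the multi-exponents $e \in \NN_0^m$ parametrising the refined truncated Koszul cocomplexes $K_{\leq e}^\bullet(S)$ of (\ref{eqn:TruncatedCechCocomplex}). Since the diagonal family $e = (k, \dots, k)$ is cofinal in $\NN_0^m$, the same colimit is given by
\[
  R\pi_* \widetilde M^\bullet \;\cong\; \lim_{e \to \infty} \Tot\left(\Hom\left(K_{\leq e}^\bullet(S), M^\bullet\right)\right)_0,
\]
whose transition maps are the inclusions $\iota$ of Lemma \ref{lem:HomotoyEquivalence} applied to each direct summand of $M^\bullet$.

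Next I would verify that, for every $e \geq e_0$, the inclusions and the Lemma's projections extend to chain maps of the full double complex
\[
  \iota \colon \Hom(K_{\leq e_0}^\bullet(S), M^\bullet)_0 \longleftrightarrow \Hom(K_{\leq e}^\bullet(S), M^\bullet)_0 \colon \pi, \quad \pi \circ \iota = \id.
\]
Compatibility with both differentials is automatic: $\iota$ and $\pi$ act on the Koszul argument of $\Hom$, whereas the horizontal differential originates from post-composition with the matrix entries of $A_p$ on the $M^\bullet$ argument and therefore commutes with them on the nose. I would then filter both double complexes by columns and compare the resulting spectral sequences. On each column one has
\[
  \Hom(K_{\leq e}^\bullet(S), M^p)_0 \;=\; \bigoplus_{i_p} \Hom(K_{\leq e}^\bullet(S), S)_{-d_{p, i_p}},
\]
and since $e_0$ dominates every $e_\textnormal{min}(-d_{p, i_p})$ by its very definition, Lemma \ref{lem:HomotoyEquivalence} provides a homotopy inverse to $\iota$ summand-by-summand. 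Thus $\iota$ induces isomorphisms on the $E_1$ pages of the two column-filtered spectral sequences, which converge strongly because $M^\bullet$ is bounded; by the spectral-sequence comparison theorem, $\Tot(\iota)$ is then a quasi-isomorphism.

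Finally, every transition map in the directed system $\bigl(\Tot(\Hom(K_{\leq e}^\bullet(S), M^\bullet))_0\bigr)_{e \geq e_0}$ is a quasi-isomorphism, and since cohomology of complexes of $R$-modules commutes with filtered colimits, the whole colimit is already quasi-isomorphic to its $e_0$-term, which is exactly the claimed identity. The main obstacle I anticipate is precisely this column-wise comparison: the homotopies provided by Lemma \ref{lem:HomotoyEquivalence} genuinely depend on the shift $-d_{p, i_p}$ and do not obviously glue into a single homotopy on the total complex that intertwines with $A_p$. The spectral-sequence formulation above sidesteps this issue by demanding only that $\iota$ and $\pi$ -- not the homotopies themselves -- commute with the horizontal differential, which holds by construction.
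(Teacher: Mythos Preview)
Your spectral-sequence comparison is correct and takes a genuinely different route from the paper. The paper argues by an explicit element chase in the total complex: given a cocycle $a$ for exponent $e' \geq e_0$, it walks down the anti-diagonal using the column homotopies $h$ of Lemma~\ref{lem:HomotoyEquivalence} to build a cochain $b$ with $\overline a := a - Db$ lying in the subcomplex for $e_0$, establishing surjectivity of $\iota^*$; injectivity is handled by the same staircase. Your argument packages this zig-zag into the comparison theorem for column-filtered spectral sequences, which is shorter and more conceptual. The paper's explicit version, by contrast, actually produces a representative $\overline a$ and a witnessing cochain $b$, which is closer in spirit to the algorithmic concerns of the surrounding sections.

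One claim in your write-up is wrong, though fortunately unnecessary: the projection $\pi$ does \emph{not} commute with the horizontal differential $\varphi$. Your justification that ``$\pi$ acts on the Koszul argument of $\Hom$'' fails because $\pi$ is only an $R$-linear retraction of strands, not induced by an $S$-linear map of Koszul complexes. Concretely, on $\PP^1$ with $\varphi = x^2 \colon S(-2) \to S$, $e_0 = 1$, $e' = 2$, the fraction $x^{-2}$ in \v Cech degree~$1$ for $S(-2)$ is killed by $\pi$, yet $\varphi(x^{-2}) = 1$ survives $\pi$ on the $S$-side, so $\pi\circ\varphi \neq \varphi\circ\pi$. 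The comparison theorem, however, needs only that the \emph{inclusion} $\iota$ be a map of double (hence filtered) complexes --- which it is, being $\Hom$ of an honest $S$-linear map of Koszul complexes and therefore natural in $M^\bullet$ --- and that $\iota$ induce an isomorphism on~$E_1$, for which $\pi$ and $h$ need only commute with $\check\partial$. Drop the assertion that $\pi$ respects $\varphi$ and your proof stands.
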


\begin{proof}
  It is a well known fact that taking cohomology commutes with direct limits. 
  Therefore, Theorem \ref{thm:CechCohomologyAsDirectLimit} provides us with 
  \[
    R^k\pi_* \widetilde{M^\bullet} =
    H^k\left(
    \lim_{e \to \infty} 
    \Tot\left(\Hom\left(K_{\leq e}^\bullet(S), M^\bullet\right)\right)_0
    \right)
    = 
    \lim_{e \to \infty} 
    H^k\left(
    \Tot\left(\Hom\left(K_{\leq e}^\bullet(S), M^\bullet\right)\right)_0
    \right).
  \]
  The limit on the right hand side is taken over the induced maps in 
  cohomology for the inclusions of cocomplexes
  for $e \leq e' \in \NN_0^m$ which is actually cofinite with the 
  limit in Theorem \ref{thm:CechCohomologyAsDirectLimit}.

  Let $e' \geq e_0$ be arbitrary and consider the inclusion of total 
  complexes
  \[
    \iota \colon 
    \Tot\left(\Hom\left(K_{\leq e_0}^\bullet(S), M^\bullet\right)\right)_0
    \hookrightarrow 
    \Tot\left(\Hom\left(K_{\leq e'}^\bullet(S), M^\bullet\right)\right)_0.
  \]
  Denote by $h$ the morphism on the columns of the double complex 
  $\Hom\left(K_{\leq e'}^\bullet(S), M^\bullet\right)_0$
  induced by the $h_{e_0}^{e'}$ from Lemma \ref{lem:HomotoyEquivalence} 
  on the respective direct sums. We first show that the map on 
  cohomology induced by $\iota$ is surjective. 
  \[
    \begin{xy}
      \xymatrix{
        \cdots \ar[r]^-\varphi &
        \Hom\left(K_{\leq e'}^{q}, M^{k-q}\right)_0 
        \ar@/_1pc/[d]_h 
        \ar[r]^-\varphi & 
        \Hom\left(K_{\leq e'}^{q}, M^{k-q+1}\right)_0 
        \ar@/_1pc/[d]_h 
        &
        & & \\
        &
        \Hom\left(K_{\leq e'}^{q-1}, M^{k-q}\right)_0 
        \ar[u]_{\check \partial} \ar[r]^-\varphi & 
        \Hom\left(K_{\leq e'}^{q-1}, M^{k-q+1}\right)_0 
        \ar@/_1pc/[d]_h 
        \ar[u]_{\check \partial} \ar[r]^-\varphi & 
        \Hom\left(K_{\leq e'}^{q-1}, M^{k-q+2}\right)_0 
        \ar@/_1pc/[d]_h 
        &
        & \\
        & & 
        \Hom\left(K_{\leq e'}^{q-2}, M^{k-q+2}\right)_0 
        \ar[u]_{\check \partial} \ar[r]^-\varphi & 
        \Hom\left(K_{\leq e'}^{q-2}, M^{k-q+3}\right)_0 
        \ar[u]_{\check \partial} \ar[r]^-\varphi & 
        \cdots & & 
      }
    \end{xy}
  \]
  Let $a = (a_0, a_1, \dots) \in 
  \Hom\left(K_{\leq e'}^0, M^k\right)_0 \oplus
  \Hom\left(K_{\leq e'}^1, M^{k-1}\right)_0 \oplus \cdots
  $
  be an abitrary element in the kernel of the differential $D$ 
  on the total complex 
  $\Tot\left(\Hom\left(K_{\leq e'}^\bullet(S), M^\bullet\right)\right)_0$ 
  in cohomological degree $k$. Here we will follow the sign 
  convention that $D$ is given by 
  \[
    \check \partial \oplus (-1)^{i + j -1} \varphi \colon
    \Hom\left(K_{\leq e'}^j, M^i\right)_0 \to
    \Hom\left(K_{\leq e'}^{j+1}, M^i\right)_0 \oplus
    \Hom\left(K_{\leq e'}^j, M^{i+1}\right)_0
  \]
  on the respective summands of the total complex.
  We construct an element 
  $\overline a$ in the subcomplex 
  $\Tot\left(\Hom\left(K_{\leq e_0}^\bullet(S), M^\bullet\right)\right)_0$ 
  which represents the same cohomology class as $a$, starting 
  with the last value $a_q \in \Hom\left(K_{\leq e'}^q, M^{k -q}\right)_0$ 
  for which $K_{\leq e'}^q$ is non-zero.

  Let $b_{q-1} = h(a_q) \in \Hom\left(K_{\leq e'}^{q-1}, M^{k-q}\right)_0$.
  Then by construction of $h$, the element $\overline a_q = a_q - \check \partial b_{q-1}$
  lays in the submodule $\Hom\left(K_{\leq e_0}^q, M^{k -q}\right)_0$.
  This ``correction term'' $b_{q-1}$ for $a_q$ contributes an unavoidable 
  increment on $a_{q-1}$ in the total complex, 
  namely $(-1)^{k} \cdot \varphi(b_{q-1})$. In an attempt to 
  reproduce the previous step, we set 
  \[
    \overline a_{q-1} := 
    a_{q-1} - (-1)^k \varphi(b_{q-1}) 
    - \check \partial \circ h\left(
    a_{q-1} - (-1)^k \varphi(b_{q-1}) 
    \right)
  \]
  and claim that, again, $\overline a_{q-1}\in 
  \Hom\left(K_{\leq e_0}^{q-1}, M^{k -q+1}\right)_0 
  \subset \Hom\left(K_{\leq e'}^{q-1}, M^{k -q+1}\right)_0$
  comes to lay in the submodule for $e_0$. To see this, note that by construction 
  of $h$, this would be the case for the element 
  \begin{eqnarray*}
    & & 
    a_{q-1} - (-1)^k \varphi(b_{q-1}) 
    + \left(\check \partial \circ h
    + h\circ \check \partial\right)\left(
    a_{q-1} - (-1)^k \varphi(b_{q-1}) 
    \right)\\
    & = & 
    a_{q-1} - (-1)^k \varphi(b_{q-1}) 
    + \check \partial \circ h
    \left(
    a_{q-1} - (-1)^k \varphi(b_{q-1}) 
    \right)
    + h
    \left(
    \check \partial(a_{q-1}) - (-1)^k \check \partial(\varphi(b_{q-1}))
    \right)
    \\
    & = & 
    a_{q-1} - (-1)^k \varphi(b_{q-1}) 
    + \check \partial \circ h\left(
    a_{q-1} - (-1)^k \varphi(b_{q-1}) 
    \right)
    + (-1)^k h\left(\varphi(a_q) - \varphi(\check \partial(b_{q-1}))\right)\\
    & = & 
    a_{q-1} + (-1)^k \varphi(b_{q-1}) 
    + \check \partial \circ h\left(
    a_{q-1} + (-1)^k \varphi(b_{q-1}) 
    \right)
    + (-1)^k h\left(\varphi(\overline a_q)\right)
  \end{eqnarray*}
  since, by assumption on $a$ to be in the kernel of $D$, we have $\check \partial a_{q-1} = -(-1)^{k-1} \varphi(a_q)$.
  But as $\varphi$ restricts to the subcomplex for $e_0$, we find that with 
  $\overline a_q$ also its image
  \[
    \varphi(\overline a_q) \in 
    \Hom\left(K_{\leq e_0}^q, M^{k -q-1}\right)_0 \subset
    \Hom\left(K_{\leq e'}^q, M^{k -q-1}\right)_0
  \]
  comes to lay in the submodule for $e_0$. Then 
  $h(\varphi(\overline a_q)) = 0$ by construction of $h$ and 
  we arrive at the desired property for $\overline a_{q-1}$. 
  Setting $b_{q-2} = h(a_{q-1} - (-1)^k\varphi(b_{q-1}))$, we can repeat that 
  procedure until we reach $a_0$. The resulting cochain 
  $b = (b_0, b_1, \dots, b_{q-1}, 0)$ then relates 
  \[
    \overline a = a - D b
  \]
  as elements in the total complex which concludes the 
  desired surjectivity of $\iota^*$ on the $k$-th cohomology.

  Injectivity of $\iota^*$ on cohomology can be established in a similar 
  way. Suppose $c = Da$ for some $c$ in the subcomplex for $e_0$ at cohomological degree $k+1$, 
  with a priori no restrictions on $a$ in cohomological degree $k$.
  We need to replace $a$ by an element $\overline a$ in the image of $\iota$.
  Again, we set $b_{q-1} = h(a_q)$ so that $\overline a_q := a_q - \check \partial b_{q-1}$ 
  is in the subcomplex for $e_0$. Then we subsequently set 
  \[
    \overline a_{q-1} = 
    a_{q-1} - (-1)^k \varphi(b_{q-1}) 
    - \check \partial \circ h \left(
    a_{q-1} - (-1)^k\varphi(b_{q-1}) 
    \right).
  \]
  Commutativity of the squares assures that 
  \[
    \check\partial(\overline a_{q-1}) =
    \check \partial(a_{q-1}) - (-1)^k\check\partial (\varphi(b_{q-1}))
    = (-1)^k\left(
    \varphi(a_q) - \varphi(\check \partial(b_{q-1}))\right)
    = (-1)^k\varphi(\overline a_q)
  \]
  is in the image of $\iota$ and hence annihilated by $h$. 
  Thus, for the same reasons as before, $\overline a_{q-1}$ must 
  also be contained in the image of $\iota$ and we can proceed inductively 
  with $b_{q-2} = h(a_{q-1} - (-1)^k\varphi(b_{q-1}))$ to produce cochains 
  $b$ and $\overline a$ so that $a - \overline a = D b$ and hence 
  \[
    c = D a = D(\overline a + Db) = D \overline a
  \]
  for $\overline a$ in the image of $\iota$, as required.
\end{proof}

%

Even with the aid of Theorem \ref{thm:Truncations}, 
we are eventually facing a second challenge: 
The sheer size of the $R$-modules for the strands tends to explode with 
increasing complexity of the input $M^\bullet$. 
An example for this behavior which has actually been driving the 
work on this project, has been given in \cite[Example 5.3]{Zach24}. 
We briefly reproduce it here.

\begin{example}
  \label{exp:2x3Example}
  Consider the space of complex $2\times 3$-matrices 
  $\CC^{2\times 3}$ and the subvariety 
  \[
    X = \left\{\varphi = 
    \begin{pmatrix}
      x & y & z \\
      u & v & w
    \end{pmatrix}
    \in \CC^{2\times 3} : \rank \varphi < 2\right\},
  \]
  with its natural stratification by rank. 
  For any pair of integers $l, k > 0$, the restriction of the function 
  \[
    f \colon \CC^{2\times 3} \to \CC^2, \quad
    \begin{pmatrix}
      x & y & z \\
      u & v & w
    \end{pmatrix}
    \mapsto 
    (x-v^k, w - y^l)
  \]
  to $X$ 
  has a singularity (in the stratified sense) which is isomorphic to the 
  third entry 
  in the list of simple Cohen-Macaulay codimension $2$ surface 
  singularities, \cite[Theorem 3.3]{FruehbisKruegerNeumer10}. 

  Associated to $f$ there is a topological invariant 
  called the \textit{Milnor number}
  which turns out to be 
  \[
    \mu(1; f) = k + l - 2.
  \]
  According to the findings in \cite{Zach24}, this number must coincide 
  with following sum of holomorphic Euler characteristics
  \begin{eqnarray*}
    \mu(1; f) &=& 
    \chi\left(R(\nu_1)_* \left(\mathcal Kosz(\nu_1^*(f_1 + f_2)) \otimes \mathcal ENC(\nu_1^*\D f)\right)\right) \\
    & & -
    \chi\left(R(\nu_1)_* \left(\mathcal Kosz(\nu_1^*(f_1 + f_2)) \otimes \mathcal ENC(\nu_1^*\D (f_1 + f_2, l_2)\right)\right) \\
    & & + \chi\left(R(\nu_1)_* \left(\mathcal Kosz(\nu_1^*(l_2)) \otimes \mathcal ENC(\nu_1^*\D (f_1 + f_2, l_2))\right)\right) \\
    & & -
    \chi\left(R(\nu_1)_* \left(\mathcal Kosz(\nu_1^*(l_2)) \otimes \mathcal ENC(\nu_1^*\D (l_1, l_2)\right)\right)
  \end{eqnarray*}
  where each one of the Euler characteristics itself is equal to a non-negative 
  topological intersection number. Here, $\nu_1$
  is the projection of the \textit{Nash transformation}
  \[
    \begin{xy}
      \xymatrix{
        \widetilde X \ar@{^(->}[r] \ar[d]_{\nu_1} & 
        \PP^2 \times \PP^1 \times \CC^{2\times 3} \ar[d]\\
        X \ar@{^(->}[r] & \CC^{2\times 3}
      }
    \end{xy}
  \]
  of $X \hookrightarrow \CC^{2\times 3}$ and $l_1 = x + v$ and $l_2 = y + w$ are ``sufficiently general linear forms'' in 
  the sense of \cite{Zach24}.
  The construction of the cocomplexes of bigraded modules representing the 
  respective tensor products of cocomplexes of sheaves 
  on $\PP^2 \times \PP^1 \times \CC^{2\times 3}$
  is also explained in \cite{Zach24}.

  The promise of \cite{Zach24} was to render the Milnor numbers 
  computable via symbolic computations through the above formula. 
  However, computations 
  with the naive approach from Theorem \ref{thm:Truncations}
  using {\v C}ech cohomology 
  for the first one of the above four summands, 
  produces a cocomplex of free $\CC[x, y, z, u, v, w]$-modules with 
  the ranks listed in Table \ref{tab:BettiNumbersPushforward}. 
  The modules in this raw output are much too big to be of any 
  practical use at the moment.
  \begin{table}
    \label{tab:BettiNumbersPushforward}
    \begin{center}
      \begin{tabular}{|r|r|r|r|r|r|r|r|}
        \hline
        cohom. degree & 6 & 5 & 4 & 3 & 2 \\
        \hline
        rank & 0 & 8,125 & 202,531 & 2,086,354 & 12,040,471 \\
        \hline
        \hline
        cohom. degree & $1$ & $0$ & $-1$ & $-2$ & $-3$ \\
        \hline
        rank & 45,096,331 & 119,557,797 & 237,459,216 & 367,251,441 & 453,451,541 \\ 
        \hline
        \hline
        cohom. degree & $-4$ & $-5$ & $-6$ & $-7$ & $-8$ \\ 
        \hline
        rank & 452,479,361 & 364,830,686 & 234,930,915 & 118,173,491 & 44,915,282 \\ 
        \hline
        \hline
        cohom. degree & $-9$ & $-10$ & $-11$ & $-12$ & $-13$ \\
        \hline
        rank & 12,284,967 & 2,239,008 & 236,445 & 10,350 & 0 \\
        \hline 
      \end{tabular}
    \end{center}
    \caption{Ranks in the raw derived pushforward cocomplex from 
    Example \ref{exp:2x3Example} for $k = 5$ and $l = 3$ using {\v C}ech cohomology}
  \end{table}
  However, the matrices for the coboundary maps of this cocomplex are rather sparse and 
  contain lots of 
  units. Therefore, this cocomplex can be simplified massively to a smaller one, 
  homotopy-equivalent to its original version;
  but the simplification itself simply takes too long: This particular example 
  ran on a reasonably sized compute server for about a month without 
  any substantial progress being made. 
  Given the size of the original mathematical problem 
  -- the restriction of $f$ to $X \subset \CC^{2\times 3}$ --
  which is rather small, 
  it seems fairly unacceptable that practical computation fails here. 
\end{example}

\section{Approximation via spectral sequences}
\label{sec:ApproximationViaSpectralSequences}

As a first approximation to the direct image, we may attempt to compute 
the \textit{spectral sequence} for the double complex 
\begin{equation}
  \label{eqn:TheDirectLimitDoubleComplex}
  \check C^{p, q}(\widetilde M^\bullet; \mathfrak U) \cong 
  \lim_{k\to \infty}
  \Hom\left(\Kosz^*(h_1^{k},\dots,h_N^{k}), M^\bullet\right)_0.
\end{equation}
On its first page, it has the entries
\[
  E_1^{p, q} = \bigoplus_{i_p = 1}^{b_p} R^q \pi_* \widetilde{S(-d_{p, i_p})}
\]
which can be given as the direct sum of the $q$-th homology of the complex 
\[
  \Hom\left(K_{\leq e_0}^\bullet(S), S(-d_{p, i_p})\right)
\]
with the minimal exponent vector $e_0$ for 
the degree $-d_{p, i_p}$.
The adaptation of these exponent vectors for the individual degrees 
$-d_{p, i_p}$ is crucial to the feasibility of the computations. 

\begin{example}
  \label{exp:2x3ExampleSpectralSequence}
  On the first page of the spectral sequence from Example \ref{exp:2x3Example}
  we find free $R$-modules $R^q\pi_* \widetilde M^p$ of ranks
  \[
    \begin{array}{c|ccccccccccccccc}
      q \backslash p & 0 & -1 & -2 & -3 & -4 & -5 & -6 & -7 & -8 & -9 & -10 & -11 & -12 & -13 & -14\\
      \hline
      0 & 1& 4&  5&   2&   0&    0&    0&     0&     0&     0&     0&     0&    0&   0& 0\\
      1 & 0& 3& 34& 127& 230&  230&  132&    42&     6&     0&     0&     0&    0&   0& 0\\
      2 & 0& 0&  6&  31&  62&   59&   26&     4&     0&     0&     0&     0&    0&   0& 0\\
      3 & 0& 0&  0&   0& 211& 1901& 7890& 19769& 32742& 36848& 27938& 13668& 3897& 492& 0
    \end{array}
  \]
  Even though the numbers in the bottom row become rather large, 
  this is still smaller than the ranks encountered in the truncated 
  total complex from Example \ref{exp:2x3Example} by a factor of more than 
  $10.000$. Moreover, the underlying complex of graded $S$-modules is the 
  resolution for a complex with non-trivial entries only in degrees $0$ up to $-3$ 
  and therefore cohomology can only appear up to the diagonal from $(0, -3)$ 
  down to $(3, -6)$ on the $\infty$-page of the spectral sequence. 
\end{example}

The first page of the spectral sequence is the natural habitat for everything 
that follows in the later pages since every entry $E_{k+1}^{p, q}$ is the homology of the 
differentials on the previous page:
\[
  E_{k}^{p-k, q+k-1}
  \overset{d}{\longrightarrow}
  E_{k}^{p, q}
  \overset{d}{\longrightarrow}
  E_{k}^{p+k, q-k+1}.
\]
For the construction of these differentials we may exploit the massive intrinsic block 
structure of the constituents of the double complex. This yields the angle 
of attack for the algorithm described here and the key to the major speedup 
compared to a direct application of Theorem \ref{thm:CechCohomologyAsDirectLimit}.
\begin{enumerate}[1)]
  \item As a first step, we may replace any right-bounded cocomplex 
    $(M^\bullet, \varphi^\bullet)$
    of finitely generated $S$-modules by an appropriate Cartan-Eilenberg resolution;
    thus we may, without loss of generality, assume every entry 
    $M^p \cong \bigoplus_{i_p =1}^{b_p} S(-d_{p, i_p})$ to be a direct sum of 
    free modules. 
  \item The column complexes of (\ref{eqn:TheDirectLimitDoubleComplex}) are 
    direct sums of direct limits of complexes of the form
    \[
      \lim_{e\to \infty} \Hom\left(K_{\leq e}^\bullet(S), S(-d)\right)_0
      = 
      \lim_{e\to \infty} \Hom\left(K_{\leq e}^\bullet(S), S\right)_{-d}
    \]
    which are strands of one and the same direct limit of complexes of 
    $S$-modules.
  \item For any fixed values for $e$ and $-d$, the matrices representing 
    the \v Cech differentials $\check \partial$ in the standard monomial bases 
    for the strands take a very particular and rather simple form: They are very sparse 
    and their non-zero entries are but $1$ and $-1$. These then form diagonal blocks 
    for the differentials of the full direct sums for the columns. In particular, 
    computations of kernels, images, and lifting is rather cheap along these maps.
  \item The matrices for all of the horizontal maps of the double complex induced 
    by the original 
    differentials $\varphi^p \colon M^p \to M^{p+1}$ again have a natural block 
    structure for the given decompositions of the respective domain and codomain. 
    Each one of these blocks is then given by the multiplication of a single 
    homogeneous polynomial $f$ on appropriate strands.
\end{enumerate}

To use this massive block structure to our advantage, we introduce 
a \textit{context object} \verb|ctx| associated to the multigraded 
ring $S$ which, on request, produces and caches the entries and maps 
of the complexes 
\[
  \Hom\left(K_{\leq e}^\bullet, S\right)
\]
for arbitrary exponent vectors $e = (e_1,\dots, e_N) \in \NN_0^N$. 
This is done together 
with their inclusion maps in the above direct limit and the induced maps on 
its strands. In particular, we store a monomial basis for every strand requested 
which then allows for computation of induced maps in constant time. 

Then, the assembly of the entries and the construction of the maps on the various pages of 
the spectral sequence is made from these constituent blocks. As a consequence, 
unavoidable Gr\"obner basis computations, e.g. for the involved kernels, only have 
to be made for $R$-modules which are subquotients of the entries on the first page, and 
the maps between them. The latter are even widely independent of one another and 
thus exhibit potential for distribution over different workers, i.e. an application 
of distributed computing. 

\section{The algorithm}
\label{sec:TheAlgorithm}

Computing spectral sequences of double complexes is a well-known concept; 
see e.g. \cite{Weibel94}. The upshot of this note is to systematically exploit 
the additional inherent structure of the \v Cech double complex to speed up this 
procedure. 

\subsection{The context object for the graded ring}

The key to speeding up the computation of the spectral sequence of the double 
complex is to disect the procedure into its many repetetive blocks, 
then precompute and cache these, and use them in a pipeline for resassembly 
of the result. This is facilitated via a context object associated 
to the homogeneous coordinate ring of $\PP\overset{\pi}{\longrightarrow} \Spec R$. \\

\verb|context_object(GradedPolyRing S) -> SpecSeqCtx|\\

\noindent
\textbf{Input:} A multigraded polynomial ring 
\[
  S = R[x_{1, 0}, \dots, x_{1, r_1}, x_{2, 0}, \dots, x_{m, r_m}], \quad 
  \deg(x_{i, k}) = (0, \dots, 0, \underbrace{1}_{i\textnormal{-th entry}}, 0, \dots, 0) \in \ZZ^m, k = 1,\dots, r_i.
\]
over a \textit{computable} ring $R$, representing a product of projective 
spaces $\mathbb P = \PP^{r_1} \times \dots \times \PP^{r_m} \overset{\pi}{\longrightarrow} \Spec R$.\\

\noindent
\textbf{Output:} A context object \verb|ctx| with the following associated methods 
for production and caching of certain objects.

\begin{itemize}
  \item A \textit{minimal cohomology model} 
    for the sheaf $\widetilde{S(d)}$ for every degree $d \in \ZZ^m$. 
    \begin{verbatim}
      minimal_cohomology_model(SpecSeqCtx ctx, Vector d) -> Cocomplex
    \end{verbatim}
    which returns a cocomplex of free $R$-modules with zero differentials 
    representing $R\pi_*\left(\widetilde{S(d)}\right)$.

  \item A \textit{minimal truncated \v Cech complex}
    for the sheaf $\widetilde{S(d)}$ for every degree $d \in \ZZ^m$. 
    \begin{verbatim}
      minimal_truncated_cech_complex(SpecSeqCtx ctx, Vector d) -> Cocomplex
    \end{verbatim}
    which returns the truncated a cocomplex 
    $\Hom\left(K_{\leq e_0(d)}^\bullet(S), S\right)_d$ for the 
    minimal exponent vector $e_0(d)$ of the respective degree.
    The \verb|minimal_cohomology_model| is a simplification of this cocomplex 
    up to homotopy.
    
  \item The inclusion and projection maps for the \verb|minimal_cohomology_model|
    into (resp. from) the \verb|minimal_truncated_cech_complex|.
    \begin{verbatim}
      cohomology_model_inclusion(SpecSeqCtx ctx, Vector d) -> CocomplexMorphism
      cohomology_model_projection(SpecSeqCtx ctx, Vector d) -> CocomplexMorphism
    \end{verbatim}
    Note that the particularly simple form of the cohomologies 
    $R^k\pi_*\left(\widetilde{S(d)}\right)$ allows us to write the latter as direct 
    summands of the cochain modules in the \verb|minimal_truncated_cech_complex|es 
    and to specify such maps. 
  
  \item The \textit{truncated \v Cech complex}
    for the sheaf $\widetilde{S(d)}$ for a degree $d \in \ZZ^m$ 
    and an exponent vector $e \geq e_0(d)$
    \begin{verbatim}
      truncated_cech_complex(SpecSeqCtx ctx, Vector d, Vector e) -> Cocomplex
    \end{verbatim}
    which returns the truncated a cocomplex 
    $\Hom\left(K_{\leq e}^\bullet(S), S\right)_d$.

  \item The inclusion and projection maps for the \v Cech complexes
    \begin{verbatim}
      cech_complex_map(SpecSeqCtx ctx, Vector d, Vector a, Vector b) 
                                                   -> CocomplexMorphism
    \end{verbatim}
    For $a \leq b$ this returns the inclusion map 
    \[
      \Hom\left(K_{\leq a}^\bullet, S\right)_d \hookrightarrow
      \Hom\left(K_{\leq b}^\bullet, S\right)_d
    \]
    and for $a \geq b$ the projection map. In case neither $a \leq b$ 
    nor $a \geq b$ can be decided, we form the supremum $c = \sup\{a, b\}$
    with entries 
    \[
      c_i = \max\{a_i, b_i\}
    \]
    for $i = 1, \dots, m$ and return the composed map for the pairs $(a, c)$ 
    and $(c, b)$.

  \item The morphism 
    induced by a multiplication 
    \[
      f \cdot - \colon S(d) \to S(d-\deg(f)), \quad g \mapsto f\cdot g
    \]
    with a homogeneous polynomial $f \in S$ on the truncated \v Cech complex 
    for an exponent vector $e \in \NN_0^m$.
    \begin{verbatim}
      cech_complex_map(SpecSeqCtx ctx, Vector d, Vector e, Poly f) 
                                                  -> CocomplexMorphism
    \end{verbatim}
    It turns out to be more efficient to not actually cache such maps, but 
    only the identifications of the generators of the strands with the monomials 
    in the complexes $\Hom\left(K_{\leq e}^\bullet, S\right)$, e.g. via 
    dictionaries. Then induced maps for $f$ can be assembled in constant time 
    from that data.
\end{itemize}

\subsection{The objects for the spectral sequence and its pages}
Computation of spectral sequences should be implemented in a lazy way as users 
tend to be interested only in specific entries of certain pages. Moreover, 
a given spectral sequence may not converge uniformly, meaning that there might 
be no specific page number $k$ such that $E_k^{i, j} \cong E_\infty^{i, j}$ 
is isomorphic to the $\infty$-page for all entries $(i, j)$ at once. 
For these reasons, creating a spectral sequence will therefore only initialize 
an object with associated methods for the production of the pages. \\

\begin{verbatim}
  spectral_sequence(GradedPolyRing S, Cocomplex M) -> CohomSpecSeq
\end{verbatim}

\noindent
\textbf{Input:} A graded ring $S$ as above and a right-bounded cocomplex 
of graded $S$-modules $M$ with homogeneous differentials of degree zero.\\

\noindent
\textbf{Output:} A \verb|CohomSpecSeq| object \verb|css| with the following 
methods associated to it for further computations 

\begin{itemize}
  \item Production and caching of the $k$-th page, $k \geq 1$:
    \begin{verbatim}
      page(CohomSpecSeq css, Int k) -> SpecSeqPage
    \end{verbatim}
  \item Return the cached context object for the graded ring $S$:
    \begin{verbatim}
      context_object(CohomSpecSeq css) -> SpecSeqCtx
    \end{verbatim}
  \item Getter functions for the graded ring $S$ via \verb|graded_ring| 
    and the complex of graded modules $M$ via \verb|graded_complex|.
\end{itemize}

\medskip
For an individual page \verb|cssp| of type \verb|SpecSeqPage| returned by 
the function \verb|page(css, k)| we provide the following functionality.
\begin{itemize}
  \item Production and caching of the entries $E_k^{i, j}$
    \begin{verbatim}
      entry(SpecSeqPage cssp, Int i, Int j) -> Module
    \end{verbatim}
    as $R$-modules. 
  \item Production and caching of the maps $\D_k \colon E_k^{i, j} \to E_k^{i+k, j-k+1}$
    \begin{verbatim}
      map(SpecSeqPage cssp, Int i, Int j) -> ModuleMorphism
    \end{verbatim}
    as morphisms of $R$-modules
  \item A getter for the number $k$ of the page via \verb|page_number(cssp)| 
    and for the spectral sequence \verb|css| of which this is the $k$-th page 
    via \verb|spectral_sequence(cssp)|.
\end{itemize}

\subsection{Computation of the first page}
\label{sec:ComputationsFirstPage}

Deviating from the common literature, we start the computation of the 
spectral sequence on page $k=1$. As the assembly of this initial page 
is somewhat different from its successors, it is deserves its own description.

\begin{verbatim}
  function produce_entry_on_first_page(SpecSeqPage cssp, Int i, Int j)
    css <- spectral_sequence(cssp)
    C <- graded_complex(css)
    ctx <- context_object(css)
    return direct_sum([minimal_cohomology_model(ctx, degree(g))[j] 
                        for g in generators(C[i])])
  end
\end{verbatim}
Here we assume that the $i$-th cochain module of a \verb|Cocomplex| $C^\bullet$
can be retrieved via \verb|C[i]|. Moreover, the given implementation of the 
category of $R$-modules must provide a method for building direct sums 
so that the output $M$ comes equipped with functionality for 
\begin{verbatim}
  canonical_injection(Module M, Int k) -> ModuleMorphism
  canonical_projection(Module M, Int k) -> ModuleMorphism
\end{verbatim}
for the respective $k$-th summand.

\medskip
The production of the maps on the first page is straightforward, but technically 
tedious to write down. The relevant map in the original graded complex $C^\bullet$ 
is of the form 
\[
  \bigoplus_{k=0}^{b_i} 
  S(-d_{i, k}) \to 
  \bigoplus_{l=0}^{b_{i+1}} 
  S(-d_{i+1, l})
\]
and has a representing matrix $A \in S^{b_i \times b_{i+1}}$ with 
homogeneous entries $f = A_{k, l}$ of degree $\deg f = -d_{i+1, l} + d_{i, k}$.
Every such non-zero entry provides a block for the induced map on the first page 
which arises as the composition of the following maps:
\[
  \begin{xy}
    \xymatrix{
      E_1^{i, j} \cong 
      \bigoplus_{k = 1}^{b_{i}}
      R^j\pi_*\left(\widetilde{S(-d_{i, k})}\right) \ar[r]^{\textnormal{pr}_k} &
      R^j\pi_*\left(\widetilde{S(-d_{i, k})}\right) \ar@{^{(}->}[r] & 
      \Hom\left(K_{\leq e_0(-d_{i, k})}^\bullet, S\right)_{-d_{i, k}} 
      \ar[dll]^{f\cdot -} \\
      \Hom\left(K_{\leq e_0(-d_{i, k})}^\bullet, S\right)_{-d_{i+1, l}} 
      \ar[r] & 
      \Hom\left(K_{\leq e_0(-d_{i+1, l})}^\bullet, S\right)_{-d_{i+1, l}}
      \ar@{^{(}->}[r]& 
      R^j\pi_*\left(\widetilde{S(-d_{i+1, l})}\right) \ar@{^{(}->}[dll]^{\textnormal{inc}_l}\\
      E_1^{i+1, j} \cong
      \bigoplus_{l = 1}^{b_{i-1}}
      R^j\pi_*\left(\widetilde{S(-d_{i+1, l})}\right). & & 
    }
  \end{xy}
\]
where $\textnormal{pr}_k$ and $\textnormal{inc}_l$ denote the projection and the 
inclusion of the $k$-th (resp. $l$-th) direct summand. 
The decomposition into these blocks and the reassembly of the result 
is spelled out in the somewhat convoluted pseudo code below. The main 
point is that from the cohomology representations on the first page 
we directly dive into the realm of cached objects from the context object 
\verb|ctx| associated to the graded ring $S$ and we avoid building any of 
the (truncations of) the \v Cech complexes for the entries of $M^\bullet$ and 
the induced maps there.
\begin{verbatim}
  function produce_map_on_first_page(SpecSeqPage cssp, Int i, Int j)
    css <- spectral_sequence(cssp)
    M <- graded_complex(css)
    ctx <- context_object(css)
    phi <- map(M, i) # the original map of graded modules
    dom <- entry(cssp, i, j) # the domain of the map to be produced
    cod <- entry(cssp, i+1, j) # the codomain of the map to be produced
    img_gens <- [] # initialize an empty list for the images of the generators
    for g0 in generators(dom) # iterate through the generators of the domain
      img_gen <- zero(cod) # initialize a variable for its image
      for k in 1 to number_of_generators(M[i])
        d0 <- -degree(generator(M[i], k))
        e0 <- minimal_exponent_vector(ctx, d0)
        pr <- canonical_projection(dom, k)
        g1 <- pr(g0) # the k-th block of this module element
        inc <- cohomology_model_inclusion(ctx, d0) 
        g2 <- inc(g1) # its image in the `minimal_truncated_cech_complex`
        orig_img_gen <- phi(generator(M[i], k)) # the image of the k-th 
                                                # generator of M[i]
        for l in 1 to number_of_generators(M[i+1])
          f <- orig_img_gen[l] # the l-th component of that generator
          d1 <- -degree(generator(M[i+1]), l) # equal to d0 + degree(f)
          mult_f <- cech_complex_map(ctx, d0, e0, f) # the induced map
          g3 <- mult_f(g2) # the image for this block of the vector
          e1 <- minimal_exponent_vector(ctx, d1)
          pr <- cech_complex_map(ctx, d1, e0, e1) 
          g4 <- pr(g3)
          pr <- cohomology_model_projection(ctx, d1)
          g5 <- pr(g4)
          inc <- canonical_inclusion(cod, l)
          g6 <- inc(g5)
          img_gen <- img_gen + g6
        end
        push(img_gens, img_gen) # push the result for g0 to the list 
                                # of images of the generators
      end
    end
    return hom(dom, cod, img_gens) # finally create the morphism
  end
\end{verbatim}
Note that this pseudo code is not yet an efficient implementation; for instance, 
the various loops can be enhanced with early abort statements.
As such improvements are straightforward to make in 
practice, but would clutter up the above conceptual pseudo code, we 
deliberately refrain from spelling them out. The interested reader is 
referred to the actual implementation in \verb|OSCAR| for details.

\subsection{Computations of higher pages}

The entry $E_{p+1}^{i, j}$ on the $p+1$-st page is computed as the cohomology 
of the maps on the previous page 
\[
  E_{p}^{i-p, j+p-1}
  \overset{d_p}{\longrightarrow}
  E_{p}^{i, j}
  \overset{d_p}{\longrightarrow}
  E_{p}^{i+p, j-p+1}, 
\]
i.e. $E_{p+1}^{i, j} \cong \ker d_p / \im d_p$.
Let us, for the sake of brevity, simply write 
\[
  \check C^{i, j}
  = 
  \lim_{e \to \infty} 
  \bigoplus_{k=1}^{b_i} 
  \Hom\left(K_{\leq e}^j, S\right)_{-d_{i, k}}
\]
for the $(i, j)$-th entry of the underlying \v Cech double 
complex $\check C^\bullet(M^\bullet; \mathfrak U)$. 
Then the differentials $d_p$ can be lifted to morphisms 
\[
  \check \psi_p \colon \check Z_{p-1}^{i, j} \to \check C^{i+p, j-p+1}
\]
as depicted in the following diagram:
\[
  \begin{xy}
    \xymatrix{
      \check Z_0^{i, j} \ar@{^{(}->}[r] 
      \ar@/_1pc/[rr]_{\check\psi_1}&
      \check C^{i, j} \ar[r]^\varphi 
      &
      \check C^{i+1, j} & & \\
      \check Z_1^{i, j} \ar@{^{(}->}[u] 
      \ar@/_1pc/[rrr]_{\check\psi_2}&
      & 
      \check C^{i+1, j-1} \ar[u]^{\check \partial} \ar[r]^\varphi 
      & 
      \check C^{i+2, j-1}
      & \\
      \check Z_2^{i, j} \ar@{^{(}->}[u] 
      \ar@/_1pc/[rrrr]_{\check\psi_3}&
      & & \check C^{i+2, j-2} \ar[u]^{\check \partial} \ar[r]^\varphi& 
      \check C^{i+3, j-2} \\
      \vdots \ar@{^{(}->}[u] 
      & & & & \ddots \\
      \check Z_{p-1}^{i, j} \ar@{^{(}->}[u] 
      \ar@/_1pc/[rrrrr]_{\check\psi_p}&
      & & & \check C^{i+p-1, j-p+1} \ar[r]^\varphi & 
      \check C^{i+p, j-p+1}
      \\
      \check Z_{p}^{i, j} \ar@{^{(}->}[u] 
      \ar@/_1pc/[rrrrrr]_{\check\psi_{p+1}}&
      & & & & \check C^{i+p, j-p} \ar[u]^{\check \partial} \ar[r]^\varphi & 
      \check C^{i+p+1, j-p} \\
    }
  \end{xy}
\]
The maps $\check \psi_p$ can be constructed inductively, 
starting from $\check \psi_0 = \check \partial$.
As $\check\psi_1$ we then take the restriction of $\varphi$ to the 
kernel $\check Z_0^{i, j}$ of $\check \partial$. 
Assuming $\check \psi_p$ to already be established, we proceed with 
the construction of $\check \psi_{p+1}$ as follows.
\begin{enumerate}[1)]
  \item Let 
    \begin{eqnarray*}
      \check B_{p-1}^{i+p, j-p+1} &=& \quad
      \im\left( \check \psi_{p-1}' \colon 
        \check Z_{p-2}^{i+1, j-1} \to \check C^{i+p, j-p+1}
      \right)\\
      & & + \quad
      \im\left( \check \psi_{p-2}' \colon 
        \check Z_{p-3}^{i+2, j-2} \to \check C^{i+p, j-p+1}
      \right)\\
      & & + \quad
      \quad
      \cdots \\
      & & + \quad
      \im\left( \check \psi_{1}' \colon 
        \check Z_{0}^{i+p-1, j-p+1} \to \check C^{i+p, j-p+1}
      \right)\\
      & & + \quad
      \im\left( \check \psi_{0}' = \check \partial \colon 
        \check C^{i+p, j-p} \to \check C^{i+p, j-p+1}
      \right)
    \end{eqnarray*}
    be the aggregated image of the previous incoming maps 
    $\check \psi_q'$ to $\check C^{i+p, j-p+1}$ and set 
    \[
      \check Z_p^{i, j} = \check \psi_p^{-1}\left(
      \check B_{p-1}^{i+p, j-p+1}
      \right).
    \]
  \item Compute a generating set 
    $u_1,\dots, u_N\in \check Z_p^{i, j} \subset \check C^{i, j}$ for this module.
  \item For every generator $u$, write 
    \[
      v = \check \psi_p(u) = 
      \check \psi'_{p-1}(u'_1) + 
      \check \psi'_{p-2}(u'_2) + 
      \cdots + 
      \check \psi'_{1}(u'_{p-1}) + 
      \check \psi'_{0}(u'_p)
    \]
    for some choices of $u'_k$ and, in particular, of $u'_p \in \check C^{i+p, j-p}$.
  \item Map $u'_p$ to $w := \varphi(u'_p) \in \check C^{i+p+1, j-p}$;
    this element $w$ then constitutes the image of the chosen generators $u$ 
    of $\check Z_p^{i,j}$ under $\check \psi_{p+1}$. Repeating steps 3) and 4) for every 
    such generator provides us with the full map $\check\psi_{p+1}$.
\end{enumerate}

\noindent That this is a correct procedure to produce the maps 
\begin{eqnarray*}
  d_p \colon 
  E_p^{i, j} \cong \check Z_{p-1}^{i, j}/ \check B_{p-1}^{i, j}
  & \longrightarrow &
  E_p^{i+p, j-p+1} \cong \check Z_{p-1}^{i+p, j-p+1}/ \check B_{p-1}^{i+p, j-p+1},
  \\
  a + \check B_{p-1}^{i, j} & \mapsto & \check\psi_p(a) + \check B_{p-1}^{i+p, j-p+1}
\end{eqnarray*}
on the respective pages of the spectral sequence for $p\geq 2$ 
is a somewhat tedious exercise 
which we leave to the reader. It can be carried out with the help of any 
textbook on homological algebra; e.g. \cite{Weibel94}. 

\medskip

The challenge we are facing at this point is that the modules 
\[
  \{0\} 
  \subset 
  \check B_0^{i, j} 
  \subset 
  \check B_1^{i, j} \subset \dots \subset 
  \check B_p^{i, j} 
  \subset \check Z_p^{i, j} \subset \dots \subset \check Z_1^{i, j} \subset 
  \check Z_0^{i, j} \subset 
  \check C^{i, j}
\]
can not be assumed to be finitely generated over $R$. 
However, already the first quotient 
\[
  E_1^{i, j} = \check Z_0^{i, j} / \check B_0^{i, j} \cong A^r
\]
is isomorphic to a \textit{finite} free $A$-module of some rank $r$. 
This follows from Theorem 
\ref{thm:CechCohomologyAsDirectLimit}, applied to the particular 
case of a complex with only one single module, in conjunction with 
Lemma \ref{lem:HomotoyEquivalence}.
Such finite modules form a much more preferable environment for the computations 
of the spectral sequence and the following lemma describes the key 
observation as to why we may actually restrict our attention 
to these.

\begin{lemma}
  \label{lem:KernelSplitting}
  Let $M = M^p = \bigoplus_{i_p=1}^{b_p} S(-d_{p, i_p})$ be the single free 
  module of $M^\bullet$ at cohomological degree $p$ and consider its \v Cech 
  complex
  \[
    \cdots
    \overset{\check \partial}{\longrightarrow}
    \lim_{e\to \infty} \Hom\left(K_{\leq e}^{j-1}, M\right)_0 
    \overset{\check \partial}{\longrightarrow}
    \lim_{e\to \infty} \Hom\left(K_{\leq e}^j, M\right)_0
    \overset{\check \partial}{\longrightarrow}
    \lim_{e\to \infty} \Hom\left(K_{\leq e}^{j+1}, M\right)_0 
    \overset{\check \partial}{\longrightarrow}
    \cdots.
  \]
  Then 
  \begin{equation}
    \label{eqn:KernelSplitting}
    \ker \check \partial \cong 
      R^j\pi_*\left(\widetilde{M}\right)
    \oplus
    \im(\check \partial)
  \end{equation}
  splits as a direct sum of free $R$-modules with the first summand being finite 
  and contained in $\Hom\left(K_{\leq e}^j, M\right)_0$ with $e$ 
  the supremum of the the minimal exponent vectors $e_0(-d)$ for $-d$ 
  ranging through the shifts $-d_{p, i_p}$ for $M$.
\end{lemma}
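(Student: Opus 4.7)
The plan is to first reduce to a single summand and then combine the homotopy of Lemma \ref{lem:HomotoyEquivalence} with the projectivity of the cohomology to produce the splitting. Since $\Hom(K_{\leq e}^\bullet, -)_0$ is additive in the second argument, the entire complex in the statement decomposes as a direct sum over the summands $S(-d_{p, i_p})$, and the supremum $e$ dominates the minimal exponent vector of each summand. A splitting of the form (\ref{eqn:KernelSplitting}) for each individual summand therefore yields the corresponding splitting for $M$, so I may assume without loss of generality that $M = S(-d)$ for a single shift $d$.

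For $M = S(-d)$, write $e_0 = e_{\mathrm{min}}(-d)$ and let $\iota, \pi, h$ denote the inclusion, projection, and homotopy from Lemma \ref{lem:HomotoyEquivalence} between the minimal truncation $\Hom(K_{\leq e_0}^\bullet, S)_{-d}$ and the direct limit. For any cocycle $a \in \ker \check \partial^j$ in the limit, the identity $\id - \iota\pi = \check \partial h + h\check \partial$ gives
\[
  a = \iota\pi(a) + \check \partial h(a),
\]
so $\ker \check \partial^j \subseteq \iota(\ker \check \partial_{e_0}^j) + \im \check \partial^{j-1}$, where the subscript $e_0$ indicates that the kernel is computed inside the minimal truncation. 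Using $\pi \circ \iota = \id$, one checks that $\iota(\ker \check \partial_{e_0}^j) \cap \im \check \partial^{j-1} = \iota(\im \check \partial_{e_0}^{j-1})$, so the quotient of the finite submodule $\iota(\ker \check \partial_{e_0}^j)$ by this intersection is canonically isomorphic to $R^j\pi_* \widetilde{S(-d)}$.

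To upgrade this surjection to a genuine internal direct sum, I would show that $R^j\pi_* \widetilde{S(-d)}$ is a free $R$-module. The key observation is that the cochain modules $\Hom(K_{\leq e_0}^\bullet, S)_{-d}$ carry canonical monomial bases over $R$ in which the \v Cech differential is represented by matrices with entries in $\{0, \pm 1\}$. In particular, the whole complex is the base change along $\ZZ \to R$ of an analogous $\ZZ$-complex whose cohomology is the classical line-bundle cohomology $H^\bullet(\PP^{r_1}\times\cdots\times\PP^{r_m}, \OO(-d))$, a free abelian group by the K\"unneth formula and the standard computation on each factor. Flat base change then yields freeness of $R^j\pi_* \widetilde{S(-d)}$, and projectivity lets me split the surjection from $\iota(\ker \check \partial_{e_0}^j)$ onto it inside the finite module $\Hom(K_{\leq e_0}^j, S)_{-d}$. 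The image of this section is then a complement to $\im \check \partial^{j-1}$ inside $\ker \check \partial^j$, which produces (\ref{eqn:KernelSplitting}); freeness of $\im \check \partial^{j-1}$ follows analogously from the base-change/$\ZZ$-structure argument.

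The main obstacle I anticipate is the freeness argument: one has to verify the monomial basis structure and the $\ZZ$-integrality of the \v Cech differentials carefully and then invoke (or reprove by induction on $m$) the classical free cohomology computation for line bundles on products of projective spaces. Once that is in place, the splitting itself is formal, and the location of the cohomology summand inside $\Hom(K_{\leq e}^j, M)_0$ follows automatically from its construction via $\iota$.
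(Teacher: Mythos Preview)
Your proposal is correct and follows essentially the same approach as the paper: reduce to a single twist via additivity, then use Lemma~\ref{lem:HomotoyEquivalence} together with the explicit monomial structure of the strands to obtain the splitting and the containment in the minimal truncation. The paper's proof is a one-line reference to exactly these ingredients (``direct images commute with direct sums'', Lemma~\ref{lem:HomotoyEquivalence}, and ``the considerations on monomial diagrams''); your freeness argument via base change from $\ZZ$ is just a more explicit way of unpacking what the paper calls the monomial-diagram description.
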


\begin{proof}
  This follows directly from the fact that direct images commute with 
  direct sums and the explicit construction of $K_{\leq e}^\bullet$
  in Section \ref{sec:ApproximationByFiniteSubmodules}; in particular 
  Lemma \ref{lem:HomotoyEquivalence} and the considerations on monomial 
  diagrams.
\end{proof}

Let 
\[
  \{0\} = B_0^{i, j} \subset B_1^{i, j} \subset \dots \subset 
  B_p^{i, j} 
  \subset Z_p^{i, j} \subset \dots 
  \subset Z_2^{i, j}
  \subset Z_1^{i, j} \subset 
  E_1^{i, j} \cong R^r
\]
be the images of the modules $\check B_q^{i, j}$ and $\check Z_q^{i, j}$ 
in $E_1^{i, j} \cong R^j\pi_*(M^i) \cong \check Z_0^{i, j}/ \check B_0^{i, j}$. Using Lemma 
\ref{lem:KernelSplitting} it is easy to see that then 
\begin{equation}
  \label{eqn:DirectSumDecompositionBandZ}
  \check B_q^{i, j} \cong B_q^{i, j} \oplus \im\check\partial
  \qquad \textnormal{and} \qquad
  \check Z_q^{i, j} \cong Z_q^{i, j} \oplus \im\check\partial.
\end{equation}
Let 
\[
  \psi_p \colon Z_{p-1}^{i, j} \to \check C^{i+p, j-p+1}
\]
be the restriction of $\check \psi_p$ to the first summand of $\check Z_{p-1}^{i, j}$ 
in that decomposition. 

\begin{lemma}
  \label{lem:RestrictedPsi}
  The map $d_p \colon E_p^{i, j} \to E_p^{i+p, j-p+1}$ induced by 
  $\check \psi_p$ depends only on $\psi_p$.
\end{lemma}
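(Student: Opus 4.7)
The plan is to use the direct sum decompositions from (\ref{eqn:DirectSumDecompositionBandZ}) to split $\check\psi_p$ according to $\check Z_{p-1}^{i,j} = Z_{p-1}^{i,j} \oplus \im\check\partial$. Under the induced identifications
\[
  E_p^{i,j} = \check Z_{p-1}^{i,j}/\check B_{p-1}^{i,j} \cong Z_{p-1}^{i,j}/B_{p-1}^{i,j}
\]
and the analogous one for the target, the lemma reduces to the claim that the restriction of $\check\psi_p$ to the complementary summand $\im\check\partial$ takes values in $\check B_{p-1}^{i+p, j-p+1}$; once this is established, that summand induces the zero map on $E_p^{i+p, j-p+1}$ and only $\psi_p$ contributes to $d_p$.

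The cleanest route to the required inclusion is to observe that every element of $\im\check\partial$ represents the zero class in $E_1^{i,j} = \ker \check\partial / \im\check\partial$, hence also in every subquotient $E_p^{i,j}$ appearing on later pages. Since $d_p$ is a well-defined operator on classes and necessarily sends the zero class to zero, one obtains $[\check\psi_p(u)] = 0$ in $E_p^{i+p, j-p+1}$ for every $u \in \im\check\partial$, which is exactly the inclusion we need. A direct verification, independent of this general principle, proceeds by induction on $p$: the base case $p=1$ is immediate from the commutation (up to sign) of the two differentials of the double complex, since $\check\psi_1(\check\partial(v)) = \varphi(\check\partial(v)) = \pm\check\partial(\varphi(v)) \in \check B_0^{i+1, j}$. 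For the inductive step one traces through the zig-zag construction of $\check\psi_{p+1}$ outlined before Lemma \ref{lem:KernelSplitting}, choosing the intermediate lifts $u'_k$ to be iterated $\varphi$-images of $v$, which forces the final value $\varphi(u'_p)$ into the summand $\im\check\partial$ of $\check B_p^{i+p+1, j-p}$.

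The main obstacle in the direct approach is the non-uniqueness of the lifts $u'_k$: two admissible choices differ by elements of $\ker\check\partial$, and one has to verify that the resulting ambiguity in $\varphi(u'_p)$ is absorbed into the cumulative image of the earlier maps $\check\psi_q'$. This is the same bookkeeping already required for the well-definedness of $d_p$ as an operator on the spectral sequence, so in practice I would give the short proof via the first route: cite the general well-definedness of $d_p$ from a standard source such as \cite{Weibel94}, and then deduce the lemma from the fact that $[u] = 0$ in $E_p^{i,j}$ for every $u \in \im\check\partial$, which forces $\check\psi_p(u) \in \check B_{p-1}^{i+p, j-p+1}$ and yields the claim.
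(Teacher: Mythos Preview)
Your argument is correct, and the core observation you eventually reach --- that every element of $\im\check\partial$ already represents the zero class in $E_p^{i,j}$ because $\im\check\partial = \check B_0^{i,j} \subset \check B_{p-1}^{i,j}$ --- is exactly the paper's proof. However, you have wrapped this one-line observation in an unnecessary detour: you first reformulate the claim as a \emph{codomain} inclusion $\check\psi_p(\im\check\partial)\subset \check B_{p-1}^{i+p,j-p+1}$, and then recover that inclusion from the domain-side vanishing plus the well-definedness of $d_p$. The paper simply argues on the domain side: since $\im\check\partial$ lies in the modulus defining $E_p^{i,j}$, any two choices of $\check\psi_p$ that agree on the complementary summand $Z_{p-1}^{i,j}$ (i.e.\ share the same $\psi_p$) induce the same $d_p$, full stop. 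No codomain analysis is needed, and in particular your inductive verification tracing through the zig-zag is entirely superfluous for this lemma. Drop the reduction step and the induction; the sentence ``$\im\check\partial\subset\check B_{p-1}^{i,j}$, so the quotient map $\check Z_{p-1}^{i,j}\to E_p^{i,j}$ factors through the projection onto $Z_{p-1}^{i,j}$'' already finishes the proof.
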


\begin{proof}
  This trivial observation follows from the definition of the domain 
  of $d_p$: The modulus of $E_p^{i, j}$ comprises the summand 
  $\im \check\partial$ on which any two candidates for 
  $\check \psi_p$ may differ if they were to give the same $\psi_p$. 
\end{proof}

For the computation of the higher pages of the spectral sequence, 
we may therefore proceed with constructing only the maps $\psi_p$. 
Even though their codomains are only direct limits of finitely generated 
$R$-modules, the finiteness of their domains over $R$ allows us to describe 
any such map in finite terms. 
To achieve this, we enrich the data stored in a \verb|SpecSeqPage| 
by 
\begin{verbatim}

  lifted_kernel_generators(SpecSeqPage cssp, Int i, Int j) -> List

\end{verbatim}
On the $p+1$-st page \verb|cssp| (standing for $E_{p+1}$) and an index pair $(i, j)$ 
this computes and stores the images of the generators 
$u_1,\dots, u_n$ of $Z_p^{i-p-1, j+p}$
for the incoming map  
under $\psi_{p+1}$ as a list of elements $\left\{w_k\right\}_{k=1}^n$ 
in $\check C^{i, j}$.

Recall that 
\[
  \check C^{i, j} = 
  \lim_{e \to \infty} 
  \bigoplus_{k=1}^{b_i} 
  \Hom\left(K_{\leq e}^\bullet, S\right)_{-d_{i, k}}
\]
is a limits of direct sums. Storing an element $w$ in this object is therefore 
equivalent to storing an exponent vector $e$ and the components 
of $w$ in the modules $\Hom\left(K_{\leq e}^\bullet, S\right)_{-d_{i, k}}$.
The $k$-th entry of the \verb|List| returned by \verb|lifted_kernel_generators| 
therefore consists of precisely this datum: An integer vector $e$ and a 
list of pairs $(l, w_{k, l})$ where $l$ is an integer for the index of 
the block in the direct sum and $w_{k, l}$ is the respective non-zero summand. 
Thus, we store the images of the generators of $\psi_{p+1}$ in a 
\textit{sparse block format}.\\

\medskip
To compute $\psi_{p+1}$ from $\psi_p$ efficiently along the above lines, 
we proceed as follows.
\begin{enumerate}[1)]
  \item Given the \verb|lifted_kernel_generators| for $\psi_p$, 
    we compute the map 
    \[
      \overline \psi_p \colon Z_{p-1}^{i, j} \to E_p^{i+p, j-p+1} 
      \cong Z_p^{i+p, j-p+1}/B_p^{i+1, j-p+1}
    \]
    as an honest map of finitely generated $R$-modules.
    This can be done by mapping the list $\{(l, w_{k, l})\}$ for each 
    image element $w_k$ for $\psi_p$ to $E_p^{i+p, j-p+1}$ in a blockwise 
    manner, similar to the procedure described for the first page in Section 
    \ref{sec:ComputationsFirstPage}. By construction, the kernel 
    of $\overline \psi_p$ coincides with $Z_p^{i, j}$.
  \item Computing a set of generators for $Z_p^{i, j}$ 
    is carried out exclusively in the category of finitely generated $R$-modules 
    which we treat as a black box for the description of this algorithm. 
    Note again that our preparations allow us to keep the $R$-modules 
    involved in this step relatively small. 
  \item Given a generator $u$ of $Z_p^{i, j}$, we compute its image 
    under $\psi_p$ in a sparse block form as follows. Let 
    $\lambda = (\lambda_1, \dots, \lambda_n)$ be the coefficients of a 
    linear combination for 
    \[
      u = \sum_{k=1}^n \lambda_k \cdot u_k'
    \]
    in the generators $\{u_k'\}_{k=1}^n$ of $Z_{p-1}^{i, j}$. 
    Forming linear combinations 
    commutes with decomposition into blocks of direct summands and, hence, the 
    image of $u$ can be given by a list of pairs $(l, v_l)$ with $l$ the index 
    of the block and 
    \[
      v_l = 
      \sum_{k=1}^n
      \lambda_k \cdot w'_{k, l}
    \]
    for $w'_{k, l}$ the $l$-th summand of the image $w'_k$ of the $k$-th generator 
    $u'_k$ of $Z_{p-1}^{i, j}$ under $\psi_p$. 
    Note that, in order to form this linear combination, 
    we will -- in the general case -- 
    need to promote all summands to a common denominator, i.e. work 
    with the supremum of the exponent vectors stored for the $w'_k$ and $u$. 
    Again, this can be done using the functions provided by the 
    \verb|SpectralSequenceCtx| object for the underlying graded ring. 

    Once we have computed $v = \{(l, v_l)\}_l$ in its sparse block format, 
    we may write 
    \[
      v = \psi_p(u) = 
      \psi'_{p-1}(u'_1) + 
      \psi'_{p-2}(u'_2) + 
      \cdots + 
      \psi'_{1}(u'_{p-1}) + 
      \check \partial(u'_p)
    \]
    for the incoming maps $\psi'_q$ as in step 3) above. 
    In practice, we first check for direct liftability of $v$ along $\check \partial$. 
    If that fails, the obstruction to lifting $v$ is actually contained in 
    $Z_0^{i+p, j-p+1}$, i.e. the first summand in (\ref{eqn:KernelSplitting}). 
    Let $\overline v$ be the projection of $v$ to that summand.
    Since we know the images $\{v'_l\}_{l=1}^N$ 
    of the generators for the incoming maps $\psi'_q$ 
    and their projections $\overline v'_l$ to $Z_0^{i+p, j-p+1}$, we write 
    \[
      \overline v = \sum_{l=1}^N \mu_l \cdot \overline v'_l,
    \]
    a task to be carried out in the category of \textit{finite} $R$-modules.
    Replacing $v$ by 
    \[
      v - \sum_{l=1}^N \mu_l \cdot v'_l,
    \]
    -- again performing the linear combination blockwise -- we then get an 
    element which certainly lifts along the (block-diagonal) map 
    $\check \partial$ to some preimage $u_p' \in \check C^{i+p, j-p}$.

  \item Mapping $u_p'$ to $\check C^{i+p+1, j-p}$ 
    along $\varphi$ is done similarly to the procedures discussed in 
    Section \ref{sec:ComputationsFirstPage}. 
\end{enumerate}

\noindent
As mentioned earlier, the differential 
$d_{p+1} \colon E_{p+1}^{i, j} \to E_{p+1}^{i+p+1, j-p}$ 
is given by the restriction of 
\[
  \psi_{p+1} \colon Z_{p}^{i, j} \to \check C^{i+p+1, j-p}
\]
to $E_{p+1}^{i, j} = \check Z_p^{i, j}/\check B_p^{i, j} \cong Z_{p}^{i, j}/B_p^{i, j}$ 
and $E_{p+1}^{i+p+1, j-p}$. Therefore, this concludes the computation of the induced maps 
on the higher pages of the spectral sequence and the entries on the subsequent page.

\section{Timings}

We plan to explore timings in relevant examples with comparable implementations 
in Macaulay2 \cite{M2} -- namely the packages \cite{ToricHigherDirectImagesSource}, 
\cite{TateOnProductsSource}, and \cite{MultigradedBGGSource} -- in a forthcoming 
update for this preprint. 

For the existing implementation in OSCAR \cite{Oscar} we can compare runtimes 
between the above implementation for spectral sequences and the straightforward computation 
of the direct image via Theorem \ref{thm:Truncations} with subsequent simplification 
up to homotopy (``prune for complexes''). 
For \cite[Example 5.2]{Zach24} we get the timings listed in 
Table \ref{tab:InternalOscarTimings}
on a laptop with 32GB of RAM and a 12th Gen Intel Core i5-1245U processor. 
\begin{table}
  \label{tab:InternalOscarTimings}
  \begin{center}
    \begin{tabular}{|l|c|c|}
      \hline
      Task & Time & Allocated memory \\
      \hline
      computation of the spectral sequence in Section \ref{sec:ApproximationViaSpectralSequences} 
      & $8.9s$ & $799.9MB$ \\
      \hline
      direct image via truncated \v Cech complexes and simplification & $551.8s$ & $55.8GB$ \\
      \hline 
    \end{tabular}
  \end{center}
  \caption{Timings for \cite[Example 5.2]{Zach24} for different methods within Oscar}
\end{table}

As already mentioned before, the computations for \cite[Example 5.3]{Zach24} 
(Example \ref{exp:2x3Example} here) along Theorem \ref{thm:Truncations} do not finish 
within a month of computing time. The entries of the $\infty$-page of the 
spectral sequence, however, can be retrieved in less than two
hours on a compute server with $120$ GB of RAM, with none of the individual 
entries taking more than one hour to compute.

\section{Conclusion and outlook}

We have presented and explained a method to speed up computation of a spectral sequence 
converging to the higher direct images of a complex of coherent sheaves along the projection 
of a product of projective spaces $\PP$ over an arbitrary computable ring $R$ 
to its base $\Spec R$.
If we assume the input to be a right-bounded cocomplex of \textit{free} 
graded $S$-modules, $S$ being the homogeneous coordinate ring for $\PP$, 
then our algorithm reduces the problem directly to procedures on finitely 
generated $R$-modules. 

A comparison of runtimes with the direct approach via 
truncated \v Cech complexes, as described in Theorem \ref{thm:Truncations}, 
suggests a major increase in efficiency for bigger examples. 
However, this comparison is tentatively unfair, as the latter method 
is known to be quite expensive and the result is richer 
in mathematical information than just the terms on the $\infty$-page 
of the spectral sequence. Comparison of timings with other, more efficient 
methods for computing higher direct images will follow in an updated version 
of this preprint. Since the described algorithm uses only a minimum of information 
to directly reduce a massively redundant task to 
computations on $R$-modules, we believe that 
the theoretical efficiency of our approach should be close to optimal. 
Moreover, the independence of one another for most pairs of entries on the pages of 
the spectral sequence yields another angle of attack for speedup via parallelization 
of tasks. 

The algorithm and its implementation have been described for products of 
projective spaces. But this should not be the only setting in which our considerations 
are applicable. For instance, it would be interesting to explore, to which extent 
the methods can be generalized to complexes of graded modules on toric varieties 
and their Cox rings. Yet another direction we plan to pursue, 
is to actually recover a complex 
for the direct image up to quasi-isomorphism 
from the information gathered in the spectral sequence 
and the process of computing it. 

\section*{Acknowledgements}

This work has been funded by the Deutsche Forschungsgemeinschaft 
(DFG, German Research Foundation) – Project-ID 286237555 – TRR 195. 

\printbibliography
\end{document}